\newtheorem{dfn}{Definition}[section]
\newtheorem{prop}[dfn]{Proposition}
\newtheorem{theo}[dfn]{Theorem}
\newtheorem{ex}[dfn]{Example}
\newcommand{\RR}{\mathbb{R}}
\newcommand{\CC}{\mathbb{C}}
\newcommand{\NN}{\mathbb{N}}
\newcommand{\U}{\mathrm{UFr}}
\newcommand{\cB}{\mathcal{B}}
\newcommand{\ra}{\rightarrow}
\newcommand{\scp}[1]{\left< #1 \right>}
\newcommand{\norm}[1]{\left\lVert #1 \right\rVert}
\newcommand{\com}{\mathbin{{\scriptstyle \circ }}}
\newcommand{\ten}{\mathbin{\otimes}}
\newcommand{\pr}{\mathord{\mathrm{pr}}}
\newcommand{\support}{\mathord{\mathrm{supp_{o}}}}
\newcommand{\uni}{\mathord{\mathrm{uni}}}
\newcommand{\inv}{\mathord{\mathrm{inv}}}
\newcommand{\mlt}{\mathord{\mathrm{mlt}}}
\newcommand{\GPD}{\mathsf{GPD}}
\title[]
      {Representations of orbifold groupoids}
\author{J. Kali\v{s}nik}
\address{Institute of Mathematics, Physics and Mechanics,
         University of Ljubljana, Jadranska 19,
         1000 Ljubljana, Slovenia}
\email{jure.kalisnik@fmf.uni-lj.si}
\thanks{This work was supported in part by
        the Slovenian Ministry of Science}
\subjclass[2000]{22A22, 22D10}
\begin{document}

\begin{abstract}
Orbifold groupoids have been recently widely used to represent
both effective and ineffective orbifolds. We show that every
orbifold groupoid can be faithfully represented on a continuous
family of finite dimensional Hilbert spaces. As a consequence we
obtain the result that every orbifold groupoid is Morita
equivalent to the translation groupoid of an almost free action of
a proper bundle of topological groups.
\end{abstract}

\maketitle

\section{Introduction}

Orbifolds have generated a lot of interest in the recent
mathematical and physical literature. As first defined in the
paper of Satake \cite{Sat}, under the name of $V$-manifolds, they
generalise the notion of smooth manifolds, by being slightly
singular. More precisely, they are locally homeomorphic to the
space of orbits of a finite group action on some Euclidian space.
The original definition of Satake is equivalent to the modern
definition of an effective orbifold.

The problem of generalising the definition of an orbifold to
incorporate ineffective group actions in local charts is
cumbersome. More convenient way is to use the language of Lie
groupoids, as shown in the work of Moerdijk and Pronk
\cite{Mo,MoPr}. Although the theory of groupoids might seem
abstract and lacking of geometric intuition at first, it provides
a powerfull tool to extend the differential geometric ideas to the
(singular) spaces such as the spaces of leaves of a foliation
\cite{CdSW,Con94}, spaces of orbits of Lie group actions and, in
our case, orbifolds. Orbifold groupoids \cite{Adem,Mo,MoMr,MoPr}
have been effectively used to represent orbifolds in the language
of Lie groupoids. The space of orbits of such a groupoid carries a
natural structure of an orbifold. Moreover, it is easy to describe
effective orbifold groupoids as those groupoids that correspond to
the effective orbifolds. In this way the definition of an
ineffective orbifold comes for free in the framework of orbifold
groupoids.

It is a well known result (see \cite{Adem} or \cite{MoMr} for
details) that the space of orbits of a smooth almost free action
of a compact Lie group on a smooth manifold, such that the slice
representations are effective, carries a natural structure of an
effective orbifold. Conversely, each effective orbifold is
isomorphic to the space of orbits of an almost free action of a
unitary group on the bundle of frames of the orbifold. In the
language of Lie groupoids this statement can be reformulated to
saying that each effective orbifold is Morita equivalent to the
translation groupoid of an almost free action of a compact group
on a smooth manifold. It is conjectured (global quotient
conjecture, see \cite{Adem} for the formulation of the
conjecture), but unknown at present, that similar statement holds
for ineffective orbifolds as well. A partial result was obtained
by Henriques and Metzler in \cite{Hen}, where they proved the
statement for the class of orbifolds, whose ineffective groups
have trivial centre.

The problem of presenting an orbifold groupoid as a translation
groupoid of an almost free action of a compact Lie group is
equivalent to finding a faithfull unitary representation of the
groupoid on some hermitian vector bundle over the space of objects
of the groupoid. We show in this paper (Theorem
\ref{Representation of orbifold groupoids}) that such a
representation (faithfull and unitary) exists on a continuous
family of finite dimensional Hilbert spaces over the space of
objects of the groupoid. As a result (see Theorem \ref{Orbifold
groupoids Morita equivalence}) we can show that each orbifold
groupoid is Morita equivalent to a translation groupoid of an
almost free action of a proper bundle of topological groups, with
each group being a finite product of unitary groups.

\section{Preliminaries}

\subsection{The Morita category of Lie groupoids}

In this section we review the basic definitions and facts that
will be used throughout the paper.

The notion of a topological groupoid is a combination and a
generalization of both topological spaces and topological groups.
The topological part is reflected in the space of orbits of the
groupoid, which carries information of its transversal structure.
On the other hand, the isotropy groups of the groupoid represent
the algebraic part of the groupoid and make it a topological space
with extra algebraic structure. Roughly, two groupoids represent
the same geometric space if they have isomorphic transversal and
algebraic structures. From the differential geometric viewpoint
Lie groupoids form the most interesting class of topological
groupoids and allow a natural extension of many of the operations
on smooth manifolds.

For the convenience of the reader we first recall the notion of a
topological groupoid (see \cite{Mrc96} for more details) and
proceed to the definition of Lie groupoids and generalised maps
between them. Detailed exposition with many examples of Lie
groupoids can be found in one of the books \cite{Mac,MoMr,MoMr2}
and references cited there.

A topological groupoid $G$ over the Hausdorff topological space
$G_{0}$ is given by a structure of a category on the topological
space $G$ with objects $G_{0}$, in which all arrows are invertible
and all the structure maps
\[
\xymatrix{G\times^{s,t}_{G_{0}}G \ar[r]^-{\mlt} &
G \ar[r]^-{\inv} & G \ar@<2pt>[r]^{s} \ar@<-2pt>[r]_{t} &
G_{0} \ar[r]^-{\uni} & G
}
\]
are continuous. The maps $s,t$ and $\mlt$ are required to be open,
while the map $\uni$ is an embedding. If $g\in G$ is any arrow
with source $s(g)=x$ and target $t(g)=y$, and $g'\in G$ is another
arrow with $s(g')=y$ and $t(g')=y'$, then the product
$g'g=\mlt(g',g)$ is an arrow from $x$ to $y'$. The map $\uni$
assigns to each $x\in G_{0}$ the identity arrow $1_{x}=\uni(x)$ in
$G$, and we often identify $G_{0}$ with $\uni(G_{0})$. The map
$\inv$ maps each $g\in G$ to its inverse $g^{-1}$. We use the
notation $G(x,y)=s^{-1}(x)\cap t^{-1}(y)$ for the set of arrows
from $x$ to $y$ and we denote by $G_{x}=G(x,x)$ the isotropy group
of the element $x$.

Each groupoid $G$ induces an equivalence relation on its space of
objects $G_{0}$ by identifying two points if and only if there is
an arrow between them. The resulting quotient map $q:G_{0}\ra
G_{0}/G$ onto the space of orbits is an open surjection. The
subset $O$ of $G_{0}$ is $G$-invariant if it is saturated with
respect to this natural equivalence relation. If $O\subset G_{0}$
is an open subset, the space $G|_{O}=t^{-1}(O)\cap s^{-1}(O)$ has
a natural structure of a topological groupoid over $O$.

We say that the groupoid $G$ is Hausdorff if the space of arrows
$G$ is a Hausdorff topological space. In this paper we will be
mostly interested in proper topological groupoids. A topological
groupoid $G$ is proper if it is Hausdorff and if the map
$(s,t):G\ra G_{0}\times G_{0}$ is a proper continuous map.

A Lie groupoid is a topological groupoid $G$ over $G_{0}$, such
that both $G$ and $G_{0}$ are smooth manifolds and where all the
structure maps are smooth. The maps $s$ and $t$ are required to be
submersions with Hausdorff fibers, to insure the existence of a
smooth manifold structure on the space $G\times^{s,t}_{G_{0}}G$,
while the manifold of objects $G_{0}$ is usually taken to be
Hausdorff and second countable. Here are some basic examples of
topological groupoids.

\begin{ex}\rm \label{Ex_Groupoids}
(i) Each smooth (Hausdorff, second countable) manifold $M$ can be
seen as a Lie groupoid with no nontrivial arrows, where
$G=G_{0}=M$ and where all the structure maps equal the identity
map on the manifold $M$. On the other hand, each Lie group is a
Lie groupoid with only one object and the structure maps induced
from the Lie group structure.

(ii) Let a Lie group $K$ act smoothly from the left on a smooth
(Hausdorff, second countable) manifold $M$. The translation
groupoid $K\ltimes M$ of this action has the manifold $M$ as the
space of objects and the space of arrows equal to $K\times M$. The
source and target maps of the translation groupoid are given by
the formulas $s(k,x)=x$ respectively $t(k,x)=k\cdot x$, while the
multiplication is given by $(k',x')(k,x)=(k'k,x)$ for $x'=k\cdot
x$. The identity and inverse maps are then induced from the group
structure of the Lie group $K$. Translation groupoids associated
to right actions of Lie groups on smooth manifolds can be defined
analogously.

(iii) Let $Q$ be a Hausdorff topological space. A bundle of
topological groups over $Q$ is given by a topological space $U$,
together with an open surjection $r:U\ra Q$, such that each fiber
of the map $r$ has a structure of a topological group and these
structures vary continuously across $Q$. Each such bundle can be
naturally seen as a topological groupoid $G=U$ over the space
$G_{0}=Q$ with the structure maps $s=t=r$ and the maps $\uni$,
$\mlt$ and $\inv$ induced by the group structures on the fibers of
the map $r$. The bundle of topological groups $U$ is locally
trivial if the map $r$ is locally trivial.

(iv) A bundle of topological groups $U$ over $Q$ is proper if it
is proper as a topological groupoid. In this case each fiber is
automatically a compact topological group. The converse is not
always true. Let $U$ be a trivial bundle of nontrivial finite
groups over $\mathbb{R}-\{0\}$ together with the trivial group at
$0\in\mathbb{R}$. This bundle of groups over $\mathbb{R}$ has
compact fibers but it is not a proper bundle of topological
groups.

(v) Let $P$ be a Hausdorff topological space and let a bundle of
topological groups $r:U\ra Q$ act on $P$ from the right along the
map $\phi:P\ra Q$ (see below for the definition of the groupoid
action). The translation groupoid $P\rtimes U$ is a topological
groupoid with the space of arrows $P\times_{Q}U$ over the space of
objects $P$. The structure maps are given by: $t(p,u)=p$,
$s(p,u)=p\cdot u$, $(p,u)(p',u')=(p,uu')$,
$\uni(p)=(p,1_{\phi(p)})$ and $(p,u)^{-1}=(p\cdot u,u^{-1})$ for
$\phi(p)=\phi(p')=r(u)=r(u')$ and $p\cdot u=p'$. If $U$ is a
proper bundle of topological groups it follows that $P\rtimes U$
is a proper topological groupoid.
\end{ex}

Morphisms between Lie groupoids are smooth functors. Two Lie
groupoids $G$ and $H$ are isomorphic if there exist morphisms
$i:G\ra H$ and $j:H\ra G$ of Lie groupoids such that $j\circ
i=id_{G}$ and $i\circ j=id_{H}$. However, in the context of the
representation theory of groupoids the notion of a generalised
morphism or a Hilsum-Skandalis map \cite{MoMr2,Mrc96}, which we
review in the sequel, is more suitable.

A smooth left action of a Lie groupoid $G$ on a smooth manifold
$P$ along a smooth map $\pi\!:P\ra G_{0}$ is a smooth map
$\mu\!:G\times^{s,\pi}_{G_{0}}P\ra P$, $(g,p)\mapsto g\cdot p$,
which satisfies $\pi(g\cdot p)=t(g)$, $1_{\pi(p)}\cdot p=p$ and
$g'\cdot(g\cdot p)=(g'g)\cdot p$, for all $g',g\in G$ and $p\in P$
with $s(g')=t(g)$ and $s(g)=\pi(p)$. We define right actions of
Lie groupoids on smooth manifolds in a similar way.

Let $G$ and $H$ be Lie groupoids. A principal $H$-bundle over $G$
is a smooth manifold $P$, equipped with a left action $\mu$ of $G$
along a smooth submersion $\pi\!:P\ra G_{0}$ and a right action
$\eta$ of $H$ along a smooth map $\phi\!:P\ra H_{0}$, such that
(i) $\phi$ is $G$-invariant, $\pi$ is $H$-invariant and both
actions commute: $\phi(g\cdot p)=\phi(p)$, $\pi(p\cdot h)=\pi(p)$
and $g\cdot(p\cdot h)=(g\cdot p)\cdot h$ for every $g\in G$, $p\in
P$ and $h\in H$ with $s(g)=\pi(p)$ and $\phi(p)=t(h)$, (ii)
$\pi:P\ra G_{0}$ is a principal right $H$-bundle:
$(\pr_{1},\eta)\!:P\times^{\phi,t}_{H_{0}}H\ra
P\times^{\pi,\pi}_{G_{0}}P$ is a diffeomorphism.

A map $f\!:P\ra P'$ between principal $H$-bundles $P$ and $P'$
over $G$ is equivariant if it satisfies $\pi'(f(p))=\pi(p)$,
$\phi'(f(p))=\phi(p)$ and $f(g\cdot p\cdot h)=g\cdot f(p)\cdot h$,
for every $g\in G$, $p\in P$ and $h\in H$ with $s(g)=\pi(p)$ and
$\phi(p)=t(h)$. Any such map is automatically a diffeomorphism.
Principal $H$-bundles $P$ and $P'$ over $G$ are isomorphic if
there exists an equivariant diffeomorphism between them. A
generalised map (sometimes called Hilsum-Skandalis map) from $G$
to $H$ is an isomorphism class of principal $H$-bundles over $G$.

If $P$ is a principal $H$-bundle over $G$ and $P'$ is a principal
$K$-bundle over $H$, for another Lie groupoid $K$, one can define
the composition $P\ten_{H}P'$ \cite{MoMr2,Mrc96,Mrc99}, which is a
principal $K$-bundle over $G$. It is the quotient of
$P\times^{\phi,\pi'}_{H_{0}}P'$ with respect to the diagonal
action of the groupoid $H$. Lie groupoids form a category $\GPD$
\cite{MoMr2,Mrc96} with generalised maps from $G$ to $H$ as
morphisms between groupoids $G$ and $H$. A principal $H$-bundle
$P$ over $G$ is called a Morita equivalence if it is also left
$G$-principal. The isomorphisms in the category $\GPD$ correspond
precisely to equivalence classes of Morita equivalences.

Actions of topological groupoids on topological spaces and the
generalised maps between topological groupoids can be defined in a
similar way. In the topological category all the maps are required
to be continuous, while the condition that the projection map
$\pi:P\ra G_{0}$ of the principal $H$-bundle $P$ is a surjective
submersion is replaced by the condition that $\pi$ is an open
surjective map.

Orbifold groupoids, which are defined in the next subsection, are
examples of Lie groupoids. In Section
\ref{Section_Representation}, where the representation theorem for
orbifold groupoids (Theorem \ref{Representation of orbifold
groupoids}) is proven, we do not need the notion of a more general
topological groupoid. However, presentation of an orbifold
groupoid by a Morita equivalent translation groupoid (Theorem
\ref{Orbifold groupoids Morita equivalence}), associated to an
almost free action of a proper bundle of topological groups, needs
to be done in the topological category.

\subsection{Orbifolds and Lie groupoids}

Orbifolds are topological spaces which generalise the notion of
smooth manifolds in a way that they locally look like quotients of
smooth manifolds by a finite group action. They were first
introduced by Satake in \cite{Sat} under the name of
$V$-manifolds. That original definition is equivalent to the
definition of effective (also called reduced) orbifolds, found in
the modern literature. A certain class of Lie groupoids, called
orbifold groupoids \cite{Mo,MoMr,MoPr}, can be used to represent
effective orbifolds and at the same time provide a way to define
ineffective orbifolds.

Let $Q$ be a topological space. An orbifold chart of dimension $n$
on the space $Q$ is given by a triple $(\tilde{U},G,\phi)$, where
$\tilde{U}$ is a connected open subset of $\RR^{n}$, $G$ is a
finite subgroup of the group $\text{Diff}(\tilde{U})$ of smooth
diffeomorphisms of $\tilde{U}$ and $\phi:\tilde{U}\ra Q$ is an
open map that induces a homeomorphism between $\tilde{U}/G$ and
$U=\phi(\tilde{U})$. An embedding of an orbifold chart
$(\tilde{U},G,\phi)$ into an orbifold chart $(\tilde{V},H,\psi)$
is a smooth embedding $\lambda:\tilde{U}\ra\tilde{V}$ that
satisfies $\psi\circ\lambda=\phi$. The charts $(\tilde{U},G,\phi)$
and $(\tilde{V},H,\psi)$ are compatible, if for any $z\in U\cap V$
there exists an orbifold chart $(\tilde{W},K,\nu)$ with $z\in W$
and embeddings of the chart $(\tilde{W},K,\nu)$ into the charts
$(\tilde{U},G,\phi)$ respectively $(\tilde{V},H,\psi)$. An
orbifold atlas (of dimension $n$) on $Q$ is given by a family
$\mathcal{U}=\{(U_{i},G_{i},\phi_{i})\}_{i\in I}$ of pairwise
compatible orbifold charts (of dimension $n$) that cover $Q$. An
atlas $\mathcal{U}$ refines the atlas $\mathcal{V}$ if every chart
of $\mathcal{U}$ can be embedded into some chart of $\mathcal{V}$.
Two orbifold atlases are equivalent if there exists an atlas that
refines both of them. Effective orbifold of dimension $n$ is a
paracompact Hausdorff topological space $Q$ together with an
equivalence class of $n$-dimensional orbifold atlases on $Q$.

Primary examples of effective orbifolds are the orbit spaces of
effective actions of finite groups on smooth manifolds, where the
charts are given by connected components of (small enough)
invariant open subsets. More generally (see \cite{Adem,MoMr}), let
a compact Lie group $K$ act smoothly and almost freely (with
finite isotropy groups) on a smooth manifold $M$. Since the
actions of compact Lie groups are proper, there exist local
slices, equipped with the actions of the isotropy groups. If these
actions are assumed to be effective, the slices can be used as the
local orbifold charts on the space of orbits $M/K$.

The group actions, defined by the charts, and especially the
isotropy groups form an important part of the orbifold structure.
Namely, two orbifolds can be non-isomorphic, despite being
homeomorphic, when seen as topological spaces. Since Lie groupoids
have a natural built-in algebraic structure, they provide a
suitable framework for the study of orbifolds \cite{MoMr,MoPr}.
Furthermore, the notion of a generalised morphism between Lie
groupoids representing orbifolds turns out to be the proper notion
of a map between the corresponding orbifolds.

Let $G$ be a Lie groupoid. If the maps $s$ and $t$ (and therefore
all structure maps) are local diffeomorphisms we call $G$ an
\'{e}tale Lie groupoid. A bisection of an \'{e}tale Lie groupoid
$G$ is an open subset $U$ of $G$ such that both $s|_{U}$ and
$t|_{U}$ are injective. Any such bisection $U$ gives a local
diffeomorphism $\tau_{U}\!:s(U)\ra t(U)$, by $\tau_{U}=t|_{U}\com
(s|_{U})^{-1}$. For any arrow $g\in G(x,y)$ there exists a
bisection $U_{g}$ containing $g$; the germ at $x$ of the induced
local diffeomorphism is independent of the choice of the
bisection.

Orbifold groupoid is a proper \'{e}tale Lie groupoid. An orbifold
groupoid $G$ is effective if for each $x\in G_{0}$ and each
nontrivial $g\in G_{x}$ the germ at $x$ of some (and therefore
every) local diffeomorphism $\tau_{U_{g}}$, defined by a bisection
through $g$, is nontrivial.

Crucial theorem in the connection between effective orbifolds and
effective orbifold groupoids states that there is a natural
structure of an effective orbifold \cite{MoMr,MoPr} on the space
of orbits of an effective orbifold groupoid. In this way an
ineffective orbifold groupoid can be seen as one possible way to
define an ineffective orbifold.

\subsection{Continuous families of Hilbert spaces}

Representation theory of topological groupoids extends the
classical representation theory of groups on vector spaces, where
the latter are replaced by families of vector spaces, indexed by
the space of objects of the groupoid. We first recall the
definition and basic properties of a continuous family of Hilbert
spaces over a topological space, as given in \cite{Dix} (see also
\cite{Bos} for further examples).

\begin{dfn}\label{Family of Hilbert spaces}
Let $B$ be a locally compact Hausdorff topological space. A
continuous family of Hilbert spaces over $B$ is given by a pair
$(\{E_{x}\}_{x\in B},\Gamma)$, where $E_{x}$ is a Hilbert space
for each $x\in B$ and $\Gamma\subset\prod_{x\in B}E_{x}$ is a
vector subspace that satisfies:
\begin{enumerate}
\item For each $x\in B$ and each $v\in E_{x}$ there exists
$s\in\Gamma$ such that $s(x)=v$;

\item For every $s_{1},s_{2}\in\Gamma$ the function $x\mapsto
\scp{s_{1}(x),s_{2}(x)}_{x}$ is a continuous function on $B$;

\item If $w\in\prod_{x\in B}E_{x}$ satisfies: for each $x\in B$
and each $\epsilon>0$ there exists a neighbourhood $U$ of $x$ and
$s\in\Gamma$ such that $\norm{s(x')-w(x')}_{x'}<\epsilon$ for all
$x'\in U$, then $w\in\Gamma$.
\end{enumerate}
Family $(\{E_{x}\}_{x\in B},\Gamma)$ is a continuous family of
finite dimensional Hilbert spaces if all the Hilbert spaces
$E_{x}$ are finite dimensional.
\end{dfn}

From the topological viewpoint the following consequence of
Definition \ref{Family of Hilbert spaces} is useful and allows us
to think of continuous families of Hilbert spaces as
generalizations of hermitian vector bundles.

\begin{prop}
Let $(\{E_{x}\}_{x\in B},\Gamma)$ be a continuous family of
Hilbert spaces over a locally compact Hausdorff space $B$. Denote
by $E=\coprod_{x\in B} E_{x}$ the disjoint union of the spaces
$\{E_{x}\}_{x\in B}$. There exists a topology on the space $E$
that makes the projection map $p:E\ra B$ (which maps each Hilbert
space $E_{x}$ to the point $x$) a continuous open surjection and
such that the space $\Gamma$ equals the space of continuous
sections of the map $p$.
\end{prop}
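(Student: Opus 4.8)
The plan is to construct the topology on $E$ explicitly using the sections in $\Gamma$ as the prescribed continuous sections, and then verify that this topology has the required properties. The natural candidate for a basis of the topology is built from tubes around local sections: for each $s\in\Gamma$, each open set $U\subset B$, and each $\epsilon>0$, I would define
\[
T(s,U,\epsilon)=\{v\in E : p(v)\in U \text{ and } \norm{v-s(p(v))}_{p(v)}<\epsilon\}.
\]
First I would check that the collection of all such sets $T(s,U,\epsilon)$ forms a basis for a topology on $E$; the key point here is that if $v$ lies in $T(s_1,U_1,\epsilon_1)\cap T(s_2,U_2,\epsilon_2)$, then using property~(2) of Definition~\ref{Family of Hilbert spaces} (continuity of $x\mapsto\scp{s_1(x),s_2(x)}_x$, hence continuity of $x\mapsto\norm{s_1(x)-s_2(x)}_x$) I can shrink $U$ and $\epsilon$ to produce a basic set around $v$ inside the intersection.

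Next I would verify that $p$ is continuous and open with respect to this topology. Continuity is immediate since $p^{-1}(U)$ is a union of basic sets, and openness follows because $p(T(s,U,\epsilon))=U$ (using property~(1) to see that every point of $U$ is hit, since some section passes through any prescribed vector). Surjectivity is clear from the definition of $E$ as the disjoint union. I would also note that the fibrewise Hilbert space structure is compatible in the sense that the restriction of the topology to each $E_x$ recovers its norm topology.

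The main content is the identification of $\Gamma$ with the continuous sections of $p$. One inclusion is direct: if $s\in\Gamma$, then for any basic open set $T(s',U,\epsilon)$, continuity of $x\mapsto\norm{s(x)-s'(x)}_x$ (again from property~(2)) shows that the preimage $s^{-1}(T(s',U,\epsilon))$ is open in $B$, so $s$ is continuous. The converse is the part I expect to be the main obstacle, and it is exactly where property~(3) (the closure/completeness axiom) is needed. Given a continuous section $w:B\to E$ of $p$, I would fix $x\in B$ and $\epsilon>0$, choose by property~(1) a section $s\in\Gamma$ with $s(x)=w(x)$, and then use continuity of $w$ together with the definition of the tube $T(s,B,\epsilon)$ to find a neighbourhood $U$ of $x$ on which $\norm{s(x')-w(x')}_{x'}<\epsilon$. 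This is precisely the hypothesis of property~(3), so $w\in\Gamma$. The delicate point to handle carefully is that $w$ being a section means $p\com w=\id_B$, so $w(x')\in E_{x'}$ and the norm $\norm{s(x')-w(x')}_{x'}$ is taken in the correct fibre; matching the tube condition to the wording of axiom~(3) is the crux, but once the basic sets are set up correctly it is a routine unwinding of definitions.
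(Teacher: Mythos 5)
Your plan follows the paper's proof almost verbatim: the tube topology, continuity and openness of $p$, the inclusion of $\Gamma$ into the continuous sections via property (2), and the converse via property (1) combined with axiom (3) are exactly the paper's arguments, and those parts of your proposal are correct.

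The one genuine gap is in the basis verification, and it is precisely the step where you cite only property (2). A basic set containing $v\in T(s_{1},U_{1},\epsilon_{1})\cap T(s_{2},U_{2},\epsilon_{2})$ must be a tube centred on \emph{some} section in $\Gamma$, and ``shrinking $U$ and $\epsilon$'' while keeping the centre $s_{1}$ (or $s_{2}$) does not work: to contain $v$, a tube around $s_{1}$ needs radius $\delta>\norm{v-s_{1}(p(v))}_{p(v)}$, and it then contains vectors on the opposite side of $s_{1}$ from $v$, which can escape the second tube. Concretely, take the trivial family $B\times\CC$ with $\Gamma$ the continuous functions, $s_{1}\equiv 0$, $s_{2}\equiv 2$, $\epsilon_{1}=1$, $\epsilon_{2}=5/2$, and $v=9/10$ in some fibre $E_{x}$. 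Then $v$ lies in both tubes, but any tube $T(s_{1},U,\delta)$ containing $v$ has $\delta>9/10$ and therefore contains $-9/10\in E_{x}$, whose distance from $s_{2}(x)=2$ is $29/10>5/2$; likewise any tube centred on $s_{2}$ containing $v$ has radius greater than $11/10$ and so contains $3\in E_{x}$, at distance $3>1$ from $s_{1}(x)$. So no shrinking of the two given tubes suffices. The missing idea, which is exactly where the paper invokes condition (1), is to \emph{re-centre}: choose $s\in\Gamma$ with $s(p(v))=v$, note that $s_{1}-s$ and $s_{2}-s$ lie in $\Gamma$ because $\Gamma$ is a linear subspace, pick $\delta>0$ smaller than both gaps $\epsilon_{i}-\norm{v-s_{i}(p(v))}_{p(v)}$, and use property (2) to find a neighbourhood $U\subset U_{1}\cap U_{2}$ of $p(v)$ on which $\norm{s_{i}(x)-s(x)}_{x}<\epsilon_{i}-\delta$ for $i=1,2$; then $v\in T(s,U,\delta)\subset T(s_{1},U_{1},\epsilon_{1})\cap T(s_{2},U_{2},\epsilon_{2})$ by the triangle inequality. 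With this correction your proof is complete and coincides with the paper's.
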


\begin{proof}
We first define a basis for the topology on the total space $E$.
For each open subset $V\subset B$, each $s\in\Gamma$ and each
$\epsilon>0$ define the tubular set $B(V,s,\epsilon)=\{v\in
E|p(v)\in V,\norm{s(p(v))-v}_{p(v)}<\epsilon\}$. Condition $(1)$
in Definition \ref{Family of Hilbert spaces} insures that the
family of all such tubular sets covers the space $E$. Now let
$W_{1}=B(V_{1},s_{1},\epsilon_{1})$ and
$W_{2}=B(V_{2},s_{2},\epsilon_{2})$ be two such tubular sets and
choose arbitrary element $v\in W_{1}\cap W_{2}$. For any such $v$
the inequalities $\norm{s_{i}(p(v))-v}_{p(v)}<\epsilon_{i}$ hold
for $i=1,2$ and there exists a section $s\in\Gamma$ such that
$s(p(v))=v$. Denote
$\delta=\min\{\epsilon_{1}-\norm{s_{1}(p(v))-v}_{p(v)},
\epsilon_{2}-\norm{s_{2}(p(v))-v}_{p(v)}\}$. Since $\Gamma$ is a
vector subspace of $\prod_{x\in B}E_{x}$, $s_{1}-s$ and $s_{2}-s$
are elements of $\Gamma$ as well. Using condition $(2)$ in
Definition \ref{Family of Hilbert spaces} we can find open
neighbourhoods $U_{1}$ and $U_{2}$ of the point $p(v)$ such that
$\norm{s_{i}(x)-s(x)}_{x}<\epsilon_{i}-\frac{\delta}{2}$ for
$i=1,2$ and all $x\in U_{1}$ respectively $x\in U_{2}$. The
tubular set $B(U_{1}\cap U_{2},s,\frac{\delta}{2})$ then satisfies
$B(U_{1}\cap U_{2},s,\frac{\delta}{2})\subset W_{1}\cap W_{2}$ and
contains the point $v$.

With the above topology the map $p$ becomes a continuous open
surjection. It remains to be proven that the space $\Gamma$ equals
the space of the continuous sections of the map $p$. Choose any
section $s\in\Gamma$, sending $x\in B$ to $v\in E$. We want to
show that $s$ is a continuous section of the map $p$. For any
basic open neighbourhood $B(V,s',\epsilon)$ of the element $v$ we
have $s-s'\in\Gamma$ and $\norm{s(x)-s'(x)}_{x}<\epsilon$.
Continuity of the map $y\mapsto\norm{s(y)-s'(y)}_{y}$ gives us a
neighbourhood $U$ of the point $x$ such that
$\norm{s(y)-s'(y)}_{y}<\epsilon$ on $U$. The neighbourhood $U\cap
V$ then satisfies $s(U\cap V)\subset B(V,s',\epsilon)$, which
proves that $s$ is a continuous section of the map $p$.
Conversely, let $s:B\ra E$ be any continuous section of the map
$p$. We will show that $s$ satisfies condition $(3)$ in Definition
\ref{Family of Hilbert spaces}. Choose an element $x\in B$ and
$\epsilon>0$. By condition $(1)$ in Definition \ref{Family of
Hilbert spaces} we can find $s'\in\Gamma$ such that $s(x)=s'(x)$.
Since $s:B\ra E$ is a continuous map, the set
$V=s^{-1}(B(B,s',\epsilon))$ is an open neighbourhood of the point
$x$ such that $\norm{s(y)-s'(y)}_{y}<\epsilon$ for every $y\in V$.
The neighbourhood with this property exists for every $x\in B$ and
every $\epsilon>0$, therefore $s\in\Gamma$.
\end{proof}

From now on we will denote the continuous family of Hilbert spaces
$(\{E_{x}\}_{x\in B},\Gamma)$ over $B$ simply by $E$, according to
the notations from the preceding proposition, and refer to
$\Gamma$ as the space of the continuous sections of the map $p$.
It is not hard to check that the space $\Gamma$ is in fact a
module over the algebra of the continuous functions on the space
$B$. The dimension $d(x)$ of the fiber $E_{x}$ of a continuous
family of Hilbert spaces is not necesarilly constant along $B$,
but it is a lower semi-continuous function on $B$, as can be seen
by using properties $(1)$ and $(2)$ of Definition \ref{Family of
Hilbert spaces}. Denote by $\support(E)=\{x\in B|d(x)>0\}$ the
support of the family of Hilbert spaces $E$. Notice that
$\support(E)$ is an open subset of $B$ since the dimension
function $d$ is lower semi-continuous.

Here are some examples of continuous families of Hilbert spaces
that will be used later on in the paper.

\begin{ex}\rm \label{Ex_Families of Hilbert spaces}
(i) Every $n$-dimensional hermitian vector bundle $E$ over a
locally compact Hausdorff space $B$ is an example of a family of
finite dimensional Hilbert spaces with fibers of constant
dimension. Conversely, if the dimension of the fibers of the
continuous family of Hilbert spaces $E$ over $B$ is a constant
function on $B$, then $E$ is actually a hermitian vector bundle
over $B$.

(ii) Let $B$ be a locally compact Hausdorff topological space,
$O\subset B$ an open subset and $E_{O}$ a hermitian vector bundle
over the space $O$. The trivial extension of the bundle $E_{O}$ is
the continuous family of Hilbert spaces $E_{O}^{B}$ over $B$,
defined as follows. The fiber of $E_{O}^{B}$ over the point $x\in
B$ is by definition
\begin{displaymath}
(E^{B}_{O})_{x}=\left\{ \begin{array} {ll}
(E_{O})_{x}, & \textrm{$x\in O$,}\\
\{0\}, & \textrm{otherwise.}\\
\end{array}\right.
\end{displaymath}
We define the vector space $\Gamma(E^{B}_{O})$ of sections of
$E^{B}_{O}$ to be the trivial extensions of those sections of
$E_{O}$ that tend to zero at the boundary of the space $O$ in $B$.
By definition, the section $s\in\Gamma(E_{O})$ tends to zero at
the boundary of the space $O$ in $B$ if for every $x\in\partial O$
and every $\epsilon>0$ there exists a neighbourhood $U$ of $x$ in
$B$ such that $\norm{s(y)}_{y}<\epsilon$ for all $y$ in $U\cap O$.
It is straightforward to check that the space $\Gamma(E^{B}_{O})$
satisfies the conditions in Definition \ref{Family of Hilbert
spaces}.

(iii) Let $\{E^{i}\}_{i\in I}$ be a collection of families of
finite dimensional Hilbert spaces over the locally compact
Hausdorff space $B$ and assume that the family of open sets
$\{\support(E^{i})\}_{i\in I}$ is locally finite over $B$. The sum
$E=\bigoplus_{i\in I}E^{i}$ of families $\{E^{i}\}_{i\in I}$ is
defined as follows. First define $E_{x}=\bigoplus_{i\in
I}E^{i}_{x}$ for each $x\in B$. Since the family
$\{\support(E^{i})\}_{i\in I}$ is locally finite, this sum is
actually a finite sum, so $E_{x}$ is a finite dimensional Hilbert
space for every $x\in B$. The space of sections $\Gamma(E)$ is
defined to be the product of the spaces $\Gamma(E^{i})$. The
induced topology on the space $E$ coincides with the topology of
the fibrewise product of the spaces $E^{i}$ along the space $B$.
\end{ex}

\section{Representations of topological groupoids}

Let $G$ be a topological groupoid with locally compact space of
objects $G_{0}$ and let $p:E\ra G_{0}$ be a continuous family of
Hilbert spaces over $G_{0}$. A continuous representation of the
groupoid $G$ on the family $E$ is given by a continuous left
action of $G$ on the space $E$, along the map $p$, such that each
$g\in G(x,y)$ acts as a linear isomorphism $g:E_{x}\ra E_{y}$.
Representation of the groupoid $G$ on the family of Hilbert spaces
$E$ is unitary if each $g\in G$ acts as a unitary map between the
corresponding Hilbert spaces.

\begin{ex}\rm
(i) Let $K$ be a topological group. Then the continuous
representations of $K$, viewed as a topological groupoid, coincide
with the continuous representations of $K$ on Hilbert spaces. Each
continuous family of Hilbert spaces over the locally compact space
$X$ is naturally a representation of the space $X$, seen as a
topological groupoid.

(ii) Combining previous two examples we get the representations of
the translation groupoid $K\ltimes X$ of a continuous action of
the topological group $K$ on the locally compact Hausdorff space
$X$. These are precisely $K$-equivariant continuous families of
Hilbert spaces over $X$, i.e. there is a fibrewise linear action
of the group $K$ on the total space $E$ such that the projection
map $p$ is $K$-equivariant.
\end{ex}

Generalised maps between groupoids can be used to pull back
representations in the same sense as vector bundles can be pulled
back by continuous maps. Let $G$ and $H$ be Lie groupoids and let
$P$ be a principal $H$-bundle over $G$. Assume that $E$ is a
hermitian vector bundle over $H_{0}$, equipped with a unitary
representation of the groupoid $H$, and denote by $\pi:P\ra G_{0}$
respectively $\phi:P\ra H_{0}$ the moment maps of the principal
bundle $P$. The pull back bundle $\phi^{\ast}E=P\times_{H_{0}}E$
has a natural structure of a vector bundle over $P$ with
projection onto the first factor as the projection map. The
groupoid $H$ acts from the right on the space $\phi^{\ast}E$ by
the formula: $(p,v)\cdot h=(p\cdot h,h^{-1}\cdot v)$. Since the
action of $H$ on $P$ is along the fibers of the map $\pi$, it is
easy to see that the map $\pi_{G}:\phi^{\ast}E/H\ra G_{0}$,
$\pi_{G}([p,v])=\pi(p)$ is well defined and continuous. We will
show that the space $P^{\ast}E=\phi^{\ast}E/H$ has a natural
structure of a hermitian vector bundle over $G_{0}$ with
projection map $\pi_{G}$. Furthermore, the action of the groupoid
$G$ on the space $P$ induces a unitary representation of the
groupoid $G$ on the bundle $P^{\ast}E$.

\begin{prop}\label{Pull back bundles}
The space $P^{\ast}E$ is a hermitian vector bundle over $G_{0}$
with a natural unitary representation of the Lie groupoid $G$.
\end{prop}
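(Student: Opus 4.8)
The plan is to first equip each fibre of $\pi_{G}$ with a well-defined hermitian structure, then exhibit local trivializations showing $P^{\ast}E$ is a hermitian vector bundle, and finally define and verify the unitary $G$-action. Fix $x\in G_{0}$ and any $p_{0}\in P$ with $\pi(p_{0})=x$. I claim the map $E_{\phi(p_{0})}\ra\pi_{G}^{-1}(x)$, $v\mapsto[p_{0},v]$, is a bijection. Surjectivity holds because every $p$ with $\pi(p)=x$ lies in the $H$-orbit of $p_{0}$, by principality of $\pi$; injectivity holds because $[p_{0},v]=[p_{0},v']$ forces $p_{0}\cdot h=p_{0}$ for some $h$, and freeness of the $H$-action gives $h=1_{\phi(p_{0})}$, whence $v=v'$. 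This transports the Hilbert space structure of $E_{\phi(p_{0})}$ to $\pi_{G}^{-1}(x)$. To see independence of $p_{0}$, note that for $p_{1}=p_{0}\cdot h$ one computes $[p_{1},w]=[p_{0},h\cdot w]$, so the two identifications differ by the map $h\colon E_{\phi(p_{1})}\ra E_{\phi(p_{0})}$; since the representation of $H$ is unitary, this is an isometric isomorphism, and the induced inner product on $\pi_{G}^{-1}(x)$ is well defined.

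Next I would build local trivializations. Because $\pi\colon P\ra G_{0}$ is a surjective submersion, each $x\in G_{0}$ has an open neighbourhood $U$ carrying a smooth section $\sigma\colon U\ra P$ with $\pi\com\sigma=\id_{U}$. The assignment $(y,v)\mapsto[\sigma(y),v]$ then gives a map $(\phi\com\sigma)^{\ast}E\ra P^{\ast}E|_{U}$, with inverse $[p,v]\mapsto(\pi(p),h\cdot v)$, where $h$ is the unique arrow satisfying $\sigma(\pi(p))\cdot h=p$. The essential point is that $p\mapsto h$ is smooth, and this is precisely the content of the principal bundle hypothesis: applying the inverse of the diffeomorphism $(\pr_{1},\eta)\colon P\times^{\phi,t}_{H_{0}}H\ra P\times^{\pi,\pi}_{G_{0}}P$ to the pair $(\sigma(\pi(p)),p)$ returns $(\sigma(\pi(p)),h)$, recovering $h$ smoothly. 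Hence $P^{\ast}E|_{U}$ is isomorphic to the pullback hermitian bundle $(\phi\com\sigma)^{\ast}E$, so $P^{\ast}E$ is locally a hermitian vector bundle over $G_{0}$; unitarity of the $H$-action forces the transition maps between overlapping trivializations to be unitary, so the hermitian structure is globally consistent.

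Finally I would construct the $G$-representation. The left $G$-action on $P$ lifts to $\phi^{\ast}E$ via $g\cdot(p,v)=(g\cdot p,v)$, which is well defined since $\phi$ is $G$-invariant. As the $G$- and $H$-actions on $P$ commute, this lift descends to the quotient as $g\cdot[p,v]=[g\cdot p,v]$, and since $\pi(g\cdot p)=t(g)$ whenever $\pi(p)=s(g)$, each $g$ yields a map $(P^{\ast}E)_{s(g)}\ra(P^{\ast}E)_{t(g)}$. To verify it is unitary, I would represent the source fibre through $p_{0}$ and the target fibre through $g\cdot p_{0}$; because $\phi(g\cdot p_{0})=\phi(p_{0})$, both fibres are identified with the same space $E_{\phi(p_{0})}$, and under these identifications $g$ acts as the identity, hence linearly and isometrically. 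Continuity of the action follows from continuity of the lifted action on $\phi^{\ast}E$ together with openness of the quotient map $\phi^{\ast}E\ra P^{\ast}E$.

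I expect the main obstacle to be the local triviality step: the delicate part is establishing smoothness of the division map $p\mapsto h$ and checking that the local frames patch into a genuine hermitian vector bundle rather than merely a fibrewise family of Hilbert spaces. This is exactly where the full force of the principal bundle condition, through the diffeomorphism $(\pr_{1},\eta)$, is required.
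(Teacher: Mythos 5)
Your proposal is correct and follows essentially the same route as the paper: both identify the fibres of $\pi_{G}$ with fibres of $E$ using the free and transitive $H$-action along the fibres of $\pi$, both invoke the division map $\delta=\pr_{2}\circ(\pr_{1},\eta)^{-1}$ (your $p\mapsto h$) together with local sections of the submersion $\pi$ to obtain local trivializations, and both define the $G$-action by $g\cdot[p,v]=[g\cdot p,v]$. The only difference is presentational — you transport the Hilbert structure fibrewise and check independence of the base point, while the paper writes the operations and inner product globally via $\delta$ — and your explicit unitarity check for the $G$-action is, if anything, slightly more detailed than the paper's.
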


\begin{proof}
First consider the induced vector bundle $\phi^{\ast}E$ over $P$.
Its fiber over the point $p\in P$ can be canonically identified to
the fiber of the vector bundle $E$ over the point $\phi(p)\in
H_{0}$, while the formula
$\scp{(p,v_{1}),(p,v_{2})}_{\phi^{\ast}E}=\scp{v_{1},v_{2}}_{E}$
induces a scalar product on the fiber $(\phi^{\ast}E)_{p}$. So
defined structures of Hilbert spaces on the fibers of the bundle
$\phi^{\ast}E$ over $P$ turn it into a hermitian vector bundle.

The groupoid $H$ acts from the right on the space $\phi^{\ast}E$
by $(p,v)\cdot h=(p\cdot h,h^{-1}\cdot v)$ for $\phi(p)=t(h)$ and
$v\in E_{t(h)}$. Let $r:\phi^{\ast}E\ra G_{0}$ be the
$H$-invariant projection defined with the formula
$r((p,v))=\pi(p)$ and denote by $\pi_{G}:\phi^{\ast}E/H\ra G_{0}$
the induced map from the quotient space. First observe that the
fibers of the map $r:\phi^{\ast}E\ra G_{0}$ equal the restrictions
$P^{\ast}E|_{\pi^{-1}(x)}$ of the bundle $P^{\ast}E$ to the fibers
of the map $\pi$ over the points $x\in G_{0}$. Furthermore, since
$P$ is a principal $H$-bundle over $G_{0}$, $H$ acts freely and
transitively along the fibers of the map $\pi$. Combining these
two observations with the fact that the action of $H$ on $E$ is
linear we get natural structures of Hilbert spaces on the fibers
of the map $\pi_{G}$, which we now describe. Let
$\delta=pr_{2}\circ(\pr_{1},\eta)^{-1}\!: P\times_{G_{0}}P\ra H$
be the continuous (actually smooth in our case) map, uniquely
defined by the condition $p\cdot\delta(p,p')=p'$, for $p$ and $p'$
that satisfy $\pi(p)=\pi(p')$. Using the map $\delta$ we can
define the maps $+:P^{\ast}E\times_{G_{0}}P^{\ast}E\ra P^{\ast}E$,
$\cdot:\CC\times P^{\ast}E\ra P^{\ast}E$ and
$\scp{-,-}_{P^{\ast}E}:P^{\ast}E\times_{G_{0}}P^{\ast}E\ra\CC$ by
the formulas
\[
[p,v]+[p',v']=[p,v+\delta(p,p')v'],
\]
\[
\lambda[p,v]=[p,\lambda v],
\]
\[
\scp{[p,v],[p',v']}_{P^{\ast}E}=\scp{v,\delta(p,p')v'}_{E}.
\]
It is straightforward to verify that these maps are well defined,
continuous and that they induce structures of Hilbert spaces on
the fibers of the map $\pi_{G}$. To show that the map
$\pi_{G}:P^{\ast}E\ra G_{0}$ carries a structure of a vector
bundle we have to show that it is locally trivial. Choose a point
$x\in G_{0}$ and an element $p\in P$ with $\pi(p)=x$. Let
$\psi:\phi^{\ast}E|_{V}\ra V\times\CC^{n}$ be a trivialization of
the bundle $\phi^{\ast}E$ on the neighbourhood $V$ of the point
$p$. Since the map $\pi$ is a submersion, there exists a local
section $s:U\ra V$ of the map $\pi|_{V}$, defined on some
neighbourhood $U$ of the point $x$, such that $s(x)=p$. Using the
maps $\psi$ and $s$ we can define the trivialization
$\psi':P^{\ast}E|_{U}\ra U\times\CC^{n}$ by the formula
$\psi'([p',v'])=(\pi(p'),pr_{2}(\psi(s(\pi(p')),\delta(s(\pi(p')),p')v'))$.
Finally, by defining $g\cdot[p,v]=[g\cdot p,v]$, we get a unitary
representation of the Lie groupoid $G$ on the hermitian vector
bundle $P^{\ast}E$.
\end{proof}

A representation of the groupoid $G$ on the family of Hilbert
spaces $E$ is faithfull if for each $x\in G_{0}$ the isotropy
group $G_{x}$ acts faithfully on the Hilbert space $E_{x}$. This
condition is equivalent to the requirement that for each $x,y\in
G_{0}$ and $g_{1},g_{2}\in G(x,y)$, with $g_{1}\neq g_{2}$, the
elements $g_{1}$ and $g_{2}$ induce different isomorphisms from
$E_{x}$ to $E_{y}$.

\begin{prop}\label{Faithfullness}
Let $G$ and $H$ be Lie groupoids and let $P$ be a Morita
equivalence between $G$ and $H$. If the representation of the
groupoid $H$ on $E$ is faithfull the representation of the
groupoid $G$ on $P^{\ast}E$ is faithfull as well.
\end{prop}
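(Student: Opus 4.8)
The plan is to use the reformulation of faithfulness given in the text just before the statement: it suffices to fix a point $x\in G_{0}$ and show that any element $g$ of the isotropy group $G_{x}$ which acts as the identity on the fiber $(P^{\ast}E)_{x}$ must be $1_{x}$. First I would choose $p\in P$ with $\pi(p)=x$; such a point exists since $\pi$ is a surjective submersion, being the moment map of a principal bundle. Recall from the proof of Proposition \ref{Pull back bundles} that $(P^{\ast}E)_{x}$ consists of the classes $[p,v]$ with $v$ ranging over $E_{\phi(p)}$, and that $G$ acts by $g\cdot[p,v]=[g\cdot p,v]$. The hypothesis that $g$ acts trivially then reads $[g\cdot p,v]=[p,v]$ for every $v\in E_{\phi(p)}$.

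Next I would exploit that $P$ is principal over $H$. Since $\pi(g\cdot p)=t(g)=x=\pi(p)$, the points $p$ and $g\cdot p$ lie in the same $\pi$-fiber, and because $H$ acts freely and transitively along these fibers there is a unique $h=\delta(p,g\cdot p)\in H$ with $g\cdot p=p\cdot h$. The key observation here is that $\phi$ is $G$-invariant, so $\phi(g\cdot p)=\phi(p)$, which forces $s(h)=t(h)=\phi(p)$; hence $h\in H_{\phi(p)}$ is an isotropy element and faithfulness of the $H$-action on $E$ will be applicable to it. Rewriting the class by means of the defining relation of $P^{\ast}E$ gives $[g\cdot p,v]=[p\cdot h,v]=[p,h\cdot v]$, and since $H$ acts freely on the $\pi$-fiber the assignment $v\mapsto[p,v]$ is injective. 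The hypothesis therefore becomes $h\cdot v=v$ for all $v\in E_{\phi(p)}$, i.e. $h$ acts as the identity on $E_{\phi(p)}$.

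The final step is where the full strength of the Morita equivalence is needed, and it is the crux of the argument. Faithfulness of the representation of $H$ on $E$, applied to the isotropy element $h\in H_{\phi(p)}$, yields $h=1_{\phi(p)}$, whence $g\cdot p=p\cdot 1_{\phi(p)}=p$. Up to this point only the $H$-principality of $P$ has been used. To upgrade $g\cdot p=p$ to $g=1_{x}$ I would invoke that $P$ is also left $G$-principal, so the left $G$-action is free along the fibers of $\phi$: applying $(\pr_{1},\mu)$ to both $(g,p)$ and $(1_{\pi(p)},p)$ gives the same pair $(p,p)$, and injectivity forces $g=1_{\pi(p)}=1_{x}$, as desired. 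The main obstacle is precisely recognizing that $\delta(p,g\cdot p)$ lands in the isotropy group $H_{\phi(p)}$, so that the faithfulness hypothesis on $H$ can be used, together with the fact that freeness of the left $G$-action — which holds only because $P$ is a Morita equivalence and not merely a principal $H$-bundle — is exactly what converts $g\cdot p=p$ into $g=1_{x}$.
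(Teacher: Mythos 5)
Your proof is correct and follows essentially the same route as the paper: your element $h=\delta(p,g\cdot p)\in H_{\phi(p)}$ is exactly the paper's $i_{p}(g)$, where the paper packages the transfer $g\cdot p=p\cdot i_{p}(g)$ as a group isomorphism $i_{p}:G_{x}\ra H_{\phi(p)}$ and deduces $g=1_{x}$ from injectivity of $i_{p}$, while you unpack that injectivity as freeness of the left $G$-action coming from left principality. The only blemish is notational: the left-principality map should be $(g,p)\mapsto(p,g\cdot p)$ (or $(\mu,\pr_{2})$), not $(\pr_{1},\mu)$, but this does not affect the argument.
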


\begin{proof}
We have to prove that for each $x\in G_{0}$ the isotropy group
$G_{x}$ acts faithfully on the vector space $(P^{\ast}E)_{x}$.
Choose $x\in G_{0}$ and any $p\in\pi^{-1}(x)\subset P$. Since $P$
is a Morita equivalence, the Lie groups $G_{x}$ and $H_{\phi(p)}$
act freely and transitively on the space
$P(p)=\pi^{-1}(x)\cap\phi^{-1}(\phi(p))$ from the left
respectively from the right. Denote by $i_{p}:G_{x}\ra
H_{\phi(p)}$ the induced bijection (which is in fact a group
isomorphism), implicitely defined by the equation $g\cdot p=p\cdot
i_{p}(g)$. Suppose that an arrow $g\in G_{x}$ acts as the identity
transformation on the space $(P^{\ast}E)_{x}$, i.e.
$g\cdot[p,v]=[p,v]$ for all $v\in E_{\phi(p)}$, where we have
identified $[p,v]\in (P^{\ast}E)_{x}$ with $v\in E_{\phi(p)}$.
Then the equality
\[
[p,v]=g\cdot[p,v]=[g\cdot p,v]=[p\cdot i_{p}(g),v]=[p,i_{p}(g)v]
\]
holds for all $v\in E_{\phi(p)}$. This shows that $i_{p}(g)$ acts
as the identity on the space $E_{\phi(p)}$ and is therefore by the
assumption of faithfullness of the representation of $H$ on $E$
equal to $1_{\phi(p)}$. Since $i_{p}$ is a group isomorphism $g$
must be equal to $1_{x}$, which shows that the representation of
$G_{x}$ on $(P^{\ast}E)_{x}$ is faithfull as well.
\end{proof}

\section{Representations of orbifold groupoids}\label{Section_Representation}

The problem of representing an orbifold groupoid $G$ faithfully
and unitarily on a hermitian vector bundle is equivalent to
finding a smooth almost free action of a compact Lie group $K$ on
a smooth manifold $M$ such that the translation groupoid $M\rtimes
K$ is Morita equivalent to the groupoid $G$.

A faithfull unitary representation of the groupoid $G$ on an
$n$-dimensional hermitian vector bundle $E$ over $G_{0}$ induces a
free left action of the groupoid $G$ on the principal
$U(n)$-bundle $\U(E)$ of unitary frames of the bundle $E$, which
commutes with the natural right action of the Lie group $U(n)$ on
the bundle $\U(E)$. Since the action of $G$ on $\U(E)$ is proper
and free, the orbit space $G\backslash \U(E)$ inherits a natural
smooth structure. Moreover, since the actions of $G$ and $U(n)$ on
$\U(E)$ commute, there exists an induced action of the group
$U(n)$ on the manifold $G\backslash \U(E)$, which is almost free
as a consequence of the fact that $G$ is an orbifold groupoid. It
is then straightforward to check that $G$ is Morita equivalent to
the translation groupoid $(G\backslash \U(E))\rtimes U(n)$.

On the other hand, let the compact Lie group $K$ act smoothly and
almost freely from the right on the smooth manifold $M$ and let
$P$ denote the Morita equivalence between the groupoids $G$ and
$M\rtimes K$. By the Peter-Weyl theorem for compact Lie groups
there exists a finite dimensional Hilbert space $V$ and a
faithfull unitary representation of the group $K$ on $V$. This
representation induces a faithfull unitary representation of the
groupoid $M\rtimes K$ on the trivial vector bundle $M\times V$,
where the action is given by $(x,g)\cdot (x',v)=(x,g\cdot v)$ for
$x'=x\cdot g$. Combining Propositions \ref{Pull back bundles} and
\ref{Faithfullness} we get a faithfull unitary representation of
the groupoid $G$ on the hermitian vector bundle $P^{\ast}(M\times
V)$ over $G_{0}$.

The question whether every orbifold groupoid admits a faithfull
unitary representation on a hermitian vector bundle is believed to
have a positive answer, but it is unproven at the moment. It has
been long known to be true for effective orbifold groupoids, as
sketched below. Each \'{e}tale Lie groupoid $G$ has a natural
representation on the tangent bundle $TG_{0}$, where an arrow
$g\in G(x,y)$ acts via the differential of the local
diffeomorphism $\tau_{U_{g}}$, induced by some bisection $U_{g}$
containing the arrow $g$. Straight from the definition it follows
that this representation of the groupoid $G$ on $TG_{0}$ is
faithfull if and only if $G$ is an effective orbifold groupoid.
This canonical representation can be extended to the
representation of $G$ on the complexified tangent bundle
$T^{\CC}G_{0}$ and made unitary by averaging an arbitrary
hermitian metric on $T^{\CC}G_{0}$. More recently, in the paper
\cite{Hen} by Henriques and Metzler, the authors proved the
statement for the class of ineffective orbifold groupoids, whose
ineffective isotropy groups have trivial centre.

However, in the broader framework of unitary representations on
continuous families of finite dimensional Hilbert spaces we are
able to prove the following result.

\begin{theo}\label{Representation of orbifold groupoids}
Let $G$ be an orbifold groupoid over $G_{0}$. Then there exists a
faithfull unitary representation of the groupoid $G$ on a
continuous family of finite dimensional Hilbert spaces over
$G_{0}$.
\end{theo}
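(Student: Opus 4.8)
The plan is to exploit the local structure of proper \'{e}tale groupoids together with the fact that finite groups possess faithful unitary representations, and then to assemble the local pieces into a single global family by means of the trivial extension and direct sum constructions of Example \ref{Ex_Families of Hilbert spaces}. The canonical representation on $T^{\CC}G_{0}$ fails for ineffective groupoids precisely because it only sees the effective part of each isotropy group; the extra freedom gained by allowing the fibre dimension to jump (a genuine continuous family rather than a vector bundle) is exactly what will let us record the \emph{entire} isotropy groups, and hence recover faithfulness.

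First I would invoke the linearization (slice) theorem for orbifold groupoids: each point $x\in G_{0}$ has an open neighbourhood $W$ together with an isomorphism $G|_{W}\cong G_{x}\ltimes W$ of Lie groupoids, where the finite isotropy group $G_{x}$ acts on $W$. On the trivial bundle $W\times\CC[G_{x}]$ carrying the left regular representation of $G_{x}$ one obtains a unitary representation of $G_{x}\ltimes W$; since the restriction of the regular representation of $G_{x}$ to any subgroup is again a multiple of a regular representation, this representation is faithful at \emph{every} point of $W$, not merely at $x$. Passing to the $G$-saturation $O=q^{-1}(q(W))$, the inclusion $G|_{W}\hookrightarrow G|_{O}$ is a Morita equivalence, so Propositions \ref{Pull back bundles} and \ref{Faithfullness} produce a faithful unitary representation of $G|_{O}$ on a hermitian vector bundle $E^{W}$ over the invariant open set $O$.

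Next I would trivially extend each $E^{W}$ to a continuous family $\widehat{E}^{W}$ of finite dimensional Hilbert spaces over all of $G_{0}$, supported on $O$ (Example \ref{Ex_Families of Hilbert spaces}(ii)); because $O$ is $G$-invariant this extension is again a $G$-representation, faithful over $O$ and zero outside. Choosing a locally finite cover $\{O_{i}\}$ of $G_{0}$ by such saturations and forming the direct sum $E=\bigoplus_{i}\widehat{E}^{W_{i}}$ (Example \ref{Ex_Families of Hilbert spaces}(iii)) then yields a continuous family of finite dimensional Hilbert spaces equipped with a unitary $G$-action. Faithfulness of $E$ is immediate: any $y\in G_{0}$ lies in some $O_{i}$, where the summand $\widehat{E}^{W_{i}}$ already restricts to a faithful representation of $G_{y}$, so $G_{y}$ acts faithfully on the whole fibre $E_{y}=\bigoplus_{i}(\widehat{E}^{W_{i}})_{y}$.

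The main obstacle I expect is arranging the cover so that the supports $\{O_{i}\}$ form a \emph{locally finite} family, which is what Example \ref{Ex_Families of Hilbert spaces}(iii) requires in order that each fibre $E_{y}$ remain finite dimensional. This should follow from properness: the orbit space $G_{0}/G$ is locally compact Hausdorff and paracompact, so one can first choose a locally finite cover of $G_{0}/G$ by the open images $q(W)$ and pull it back through the open map $q$. The remaining points to check are routine: that the slice isomorphism can be taken $G_{x}$-equivariantly, that the inclusion of a chart into its saturation is genuinely a Morita equivalence (i.e.\ that $W$ is a complete transversal), and that unitarity is preserved by both the trivial extension and the direct sum.
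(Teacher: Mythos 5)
Your overall strategy is the paper's own: local slices $G|_{W}\cong G_{x}\ltimes W$ carrying the left regular representation on $W\times\CC[G_{x}]$ (faithful at every point of $W$ because isotropy groups of points of $W$ are subgroups of $G_{x}$), transfer to the invariant saturation via the Morita equivalence given by $t^{-1}(W)$ together with Propositions \ref{Pull back bundles} and \ref{Faithfullness} (this is exactly Proposition \ref{Invariant}), then trivial extensions, a locally finite invariant cover pulled back from a locally finite cover of the paracompact orbit space $G_{0}/G$, and a direct sum whose faithfulness is checked fibrewise. All of this matches the paper's proof of Theorem \ref{Representation of orbifold groupoids}.

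There is, however, one genuine gap: you assert that ``because $O$ is $G$-invariant this extension is again a $G$-representation.'' Invariance of $O$ only makes the extended action well defined set-theoretically (arrows over $\partial O$ must send $0_{x}$ to $0_{y}$); what is not automatic is the \emph{continuity} of the action map $\mu:G\times_{G_{0}}E_{O}^{G_{0}}\ra E_{O}^{G_{0}}$ at points $(g,0_{x})$ with $x\in\partial O$. This is precisely the content of the paper's Proposition \ref{Extension}, whose proof takes a bisection $U$ through $g$ with $t(U)$ inside a prescribed neighbourhood and uses unitarity to get $\mu(U\times_{G_{0}}B(s(U),0,\epsilon))\subset B(t(U),0,\epsilon)$ for the tubular neighbourhoods of the zero section that define the topology of the trivial extension. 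Unitarity is essential here, not a cosmetic normalization: for a merely continuous representation of $G|_{O}$ whose fibrewise operator norms blow up near $\partial O$, sections tending to zero at the boundary can be carried to unbounded ones, and the extended action fails to be continuous. Your closing list of ``routine'' checks mentions that unitarity is \emph{preserved} by the trivial extension, but misses that unitarity is what makes the extended action continuous in the first place. Since your local representations are unitary (regular representations pulled back through a Morita equivalence), the gap is fillable, and filling it reproduces Proposition \ref{Extension}; but as written, the extension step is unjustified.
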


We start by proving some propositions that will be needed in the
proof of Theorem \ref{Representation of orbifold groupoids}.

\begin{prop}\label{Invariant}
Let $G$ be an orbifold groupoid over $G_{0}$. For each $x\in
G_{0}$ there exist a $G$-invariant open neighbourhood $O_{x}$ of
$x$ and a faithfull unitary representation of the groupoid
$G|_{O_{x}}$ on a hermitian vector bundle $E_{O_{x}}$ over
$O_{x}$.
\end{prop}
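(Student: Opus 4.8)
The plan is to reduce the statement to the representation theory of the finite isotropy group $G_{x}$ by exploiting the local structure of a proper \'{e}tale groupoid. First I would record that since $G$ is \'{e}tale the source fibres are discrete, and since $G$ is proper the set $G_{x}=G(x,x)$ is compact; a discrete compact set is finite, so $G_{x}=\{g_{1},\dots,g_{n}\}$ is a finite group, with $g_{1}=1_{x}$.

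The key step is a slice theorem presenting $G$ near $x$ as the translation groupoid of a finite group action. For each $g_{i}$ I would choose a bisection $U_{i}$ through $g_{i}$, with the $U_{i}$ pairwise disjoint, obtaining local diffeomorphisms $\tau_{i}=\tau_{U_{i}}$ defined on a common neighbourhood of $x$ and fixing $x$. Using properness I would then produce a neighbourhood $\tilde{U}$ of $x$ with compact closure, so small that every arrow $g$ with $s(g),t(g)\in\tilde{U}$ lies in one of the $U_{i}$: otherwise a sequence of such arrows avoiding $\bigcup_{i}U_{i}$ would, by properness of $(s,t)$, subconverge to an arrow of $G_{x}$ and hence eventually lie in some open $U_{i}$, a contradiction. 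Since the germs at $x$ of the $\tau_{i}$ compose according to the multiplication of $G_{x}$, a further shrinking yields a $G_{x}$-invariant $\tilde{U}$ on which the $\tau_{i}$ define an honest action of $G_{x}$, and sending $(g_{i},y)$ to the unique arrow of $U_{i}$ with source $y$ gives an isomorphism $G_{x}\ltimes\tilde{U}\cong G|_{\tilde{U}}$.

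With this local model I would let $O_{x}$ be the $G$-saturation of $\tilde{U}$, which is $G$-invariant and open because $s$ and $t$ are open. The inclusion $G|_{\tilde{U}}\hookrightarrow G|_{O_{x}}$ is essentially surjective (every orbit in $O_{x}$ meets $\tilde{U}$ by construction) and fully faithful (it is a full restriction), hence gives a Morita equivalence $P$ between $G|_{O_{x}}$ and $G_{x}\ltimes\tilde{U}$. As $G_{x}$ is finite it admits a faithful unitary representation on a finite dimensional Hilbert space $V$ (e.g.\ the regular representation, with a $G_{x}$-invariant inner product obtained by averaging), and this produces a faithful unitary representation of $G_{x}\ltimes\tilde{U}$ on the trivial hermitian bundle $\tilde{U}\times V$ --- faithful because each isotropy group of the action is a subgroup of $G_{x}$ and so still acts faithfully on $V$. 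I would then pull this representation back along $P$: Proposition \ref{Pull back bundles} supplies a unitary representation of $G|_{O_{x}}$ on the hermitian vector bundle $E_{O_{x}}=P^{\ast}(\tilde{U}\times V)$ over $O_{x}$, and Proposition \ref{Faithfullness} guarantees it is faithful.

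The main obstacle is the slice theorem of the second paragraph: converting the finitely many germs $\tau_{i}$ into a genuine $G_{x}$-action on a single invariant neighbourhood and checking that no spurious arrows survive the shrinking. This is precisely where both hypotheses are used --- the \'{e}tale condition to make the arrows discrete, separable, and to produce well-defined local diffeomorphisms via bisections, and properness to compactly control the arrows between nearby points. Everything downstream is a formal transport of representations along a Morita equivalence through the two propositions already proved.
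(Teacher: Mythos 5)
Your proof is correct and follows essentially the same route as the paper: linearize $G$ near $x$ as $G_{x}\ltimes U_{x}$, represent this faithfully and unitarily via the regular representation of the finite group $G_{x}$ on the trivial bundle, pass to the saturation $O_{x}$, and transport the representation across the Morita equivalence between $G|_{U_{x}}$ and $G|_{O_{x}}$ (your inclusion weak equivalence corresponds to the paper's explicit bibundle $t^{-1}(U_{x})$) using Propositions \ref{Pull back bundles} and \ref{Faithfullness}. The only real difference is that you sketch a proof of the local linearization (slice) theorem, whereas the paper simply cites it as a known structural property of orbifold groupoids.
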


\begin{proof}
In the proof of the proposition we use the following
characterization of the local structure of orbifold groupoids
\cite{MoMr,MoPr}. For each $x\in G_{0}$ there exist a
neighbourhood $U_{x}$ of $x$ and a natural isomorphism of Lie
groupoids $G|_{U_{x}}\cong G_{x}\ltimes U_{x}$, where each $g\in
G_{x}$ acts on $U_{x}$ by the diffeomorphism corresponding to the
suitable bisection through $g$. Let $\CC[G_{x}]$ denote the
Hilbert space of complex functions on the finite group $G_{x}$
with the orthonormal basis $\{\delta_{g}\}_{g\in G_{x}}$. The left
regular representation of the group $G_{x}$ on the space
$\CC[G_{x}]$ induces a faithfull unitary representation of the
groupoid $G|_{U_{x}}\cong G_{x}\ltimes U_{x}$ on the trivial
vector bundle $U_{x}\times\CC[G_{x}]$ by the formula
$(g,x)\cdot(x,f)=(g\cdot x,g\cdot f)$.

The saturation $O_{x}=s(t^{-1}(U_{x}))$ of the open set $U_{x}$ is
again an open set since $s$ is a submersion and hence an open map.
It is straightforward to check that the manifold
$P=t^{-1}(U_{x})$, together with the left action of the groupoid
$G|_{U_{x}}$ and the right action of the groupoid $G|_{O_{x}}$,
defines a Morita equivalence between the groupoids $G|_{U_{x}}$
and $G|_{O_{x}}$. Denote by
$E_{O_{x}}=(P^{-1})^{*}(U_{x}\times\CC[G_{x}])$ the pullback
bundle over $O_{x}$, together with the induced unitary
representation of the groupoid $G|_{O_{x}}$. Since the
representation of the groupoid $G|_{U_{x}}$ on
$U_{x}\times\CC[G_{x}]$ was faithfull and since $P^{-1}$ is a
Morita equivalence the representation of $G|_{O_{x}}$ on
$E_{O_{x}}$ is faithfull by Proposition \ref{Faithfullness}.
\end{proof}

\begin{prop}\label{Extension}
Let $G$ be an orbifold groupoid over $G_{0}$ and let $O\subset
G_{0}$ be a $G$-invariant open subset of $G_{0}$. Every unitary
representation of the groupoid $G|_{O}$ on a hermitian vector
bundle $E_{O}$ over $O$ can be extended to a unitary
representation of the groupoid $G$ on the continuous family of
finite dimensional Hilbert spaces $E^{G_{0}}_{O}$ over $G_{0}$.
\end{prop}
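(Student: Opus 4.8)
The plan is to use the trivial extension construction from Example \ref{Ex_Families of Hilbert spaces}(ii), which produces the family $E^{G_0}_O$ by filling in the zero Hilbert space over the complement of $O$. The underlying family of Hilbert spaces is therefore already fixed; what I must supply is a continuous left action of the whole groupoid $G$ on the total space that restricts to the given representation over $O$ and acts as the zero map elsewhere. Since $O$ is $G$-invariant, the source and target of any arrow $g$ lie simultaneously inside $O$ or simultaneously outside $O$: if $g\in G(x,y)$ with $x\in O$ then $y\in O$ (and conversely), because an arrow identifies its endpoints under the orbit equivalence and $O$ is saturated. This dichotomy is what makes the extension well defined, and verifying it is the natural first step.

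Given the dichotomy, I would define the action of $g\in G(x,y)$ on the fiber $(E^{G_0}_O)_x$ by cases. If $x,y\in O$, then $(E^{G_0}_O)_x=(E_O)_x$ and $(E^{G_0}_O)_y=(E_O)_y$, and I let $g$ act exactly as in the given representation of $G|_O$; this is unitary by hypothesis. If $x,y\notin O$, both fibers are $\{0\}$ and $g$ acts as the unique (vacuously unitary) map between them. There is no mixed case, so the action is defined on every arrow, and the groupoid axioms ($1_x$ acts as the identity, $g'\cdot(g\cdot v)=(g'g)\cdot v$) follow immediately from the corresponding axioms for the $G|_O$-representation on the open part and from triviality on the complement.

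The substantive point, and the step I expect to be the main obstacle, is \emph{continuity} of this action as a map on the total space $E^{G_0}_O$ equipped with the topology from the Proposition following Definition \ref{Family of Hilbert spaces}. Over $t^{-1}(O)\cap s^{-1}(O)=G|_O$ continuity is inherited directly from the continuity of the original representation. The delicate region is where arrows approach the boundary $\partial O$: I must check that the action map is continuous at points lying over $G_0\setminus O$, where the target fiber is $\{0\}$. Here the key is precisely the defining property of $\Gamma(E^{G_0}_O)$, namely that its sections are built from sections of $E_O$ that tend to zero at the boundary of $O$ in $G_0$. Concretely, to test continuity of $\mu\colon G\times^{s,p}_{G_0}E^{G_0}_O\to E^{G_0}_O$ at a pair $(g_0,v_0)$ with $s(g_0)\notin O$, I take a basic tubular neighbourhood of $\mu(g_0,v_0)=0$ in the target; since the relevant sections of the extended family decay to zero near $\partial O$, the preimage contains a neighbourhood of $(g_0,v_0)$, so no norm can jump across the boundary. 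I would carry this out by expressing a generic element near $(g_0,v_0)$ through sections $s\in\Gamma(E^{G_0}_O)$ and using the boundary-decay condition together with the fact that $s,t$ are open maps; the unitarity of the action on $O$ guarantees that norms are preserved on the interior, so the only way the norm of $g\cdot v$ can fail to be small near the boundary is ruled out by the decay hypothesis. Once continuity is established, the three conditions of Definition \ref{Family of Hilbert spaces} for $E^{G_0}_O$ have already been checked in Example \ref{Ex_Families of Hilbert spaces}(ii), and the action is unitary fiberwise by construction, completing the proof.
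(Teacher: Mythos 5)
Your proposal is correct and follows essentially the same route as the paper: the identical case-wise definition of the action (using $G$-invariance of $O$ to rule out mixed arrows), continuity inherited over the interior, and the boundary case handled via tubular neighbourhoods of the zero section combined with unitarity. The only minor difference is in how the boundary step is packaged: the paper shrinks the target neighbourhood of $0_y$ to a tube $B(U_y,0,\epsilon)$ around the zero section and chooses a bisection $U$ through $g$ with $t(U)=U_y$, after which norm preservation alone gives $\mu(U\times_{G_0}B(s(U),0,\epsilon))\subset B(U_y,0,\epsilon)$, so the boundary-decay of sections that you highlight enters only implicitly (it is what makes such tubes a neighbourhood basis at $0_y$), and continuity of $t$ (via the bisection) rather than openness of $s,t$ is what controls the base direction.
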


\begin{proof}
Let $E_{O}$ be a Hermitian vector bundle over the space $O$,
equipped with a unitary representation of the groupoid $G|_{O}$.
Denote by $p:E^{G_{0}}_{O}\ra G_{0}$ the trivial extension of the
hermitian vector bundle $E_{O}$ over $O$ to a family of finite
dimensional Hilbert spaces over $G_{0}$ as in Example
\ref{Ex_Families of Hilbert spaces}. Recall that
\begin{displaymath}
(E^{G_{0}}_{O})_{x}=\left\{ \begin{array} {ll}
(E_{O})_{x}, & \textrm{if $x\in O$,}\\
\{0\}, & \textrm{otherwise.}\\
\end{array}\right.
\end{displaymath}
For any arrow $g\in G$ we define the action as follows:

(i) If $g\in G|_{O}$ let $g$ act on $E^{G_{0}}_{O}$ as it acts on
$E_{O}$;

(ii) If $g\notin G|_{O}$ and $g\in G(x,y)$ then $g$ acts in the
only possible way, sending the vector $0_{x}$ to the vector
$0_{y}$.

This defines a unitary representation of the groupoid $G$ on the
family of finite dimensional Hilbert spaces $E^{G_{0}}_{O}$, which
extends the representation of the groupoid $G|_{O}$ on the vector
bundle $E_{O}$. To prove the claim of the proposition we have to
check that this defines a continuous representation, i.e. the map
$\mu:G\times_{G_{0}}E^{G_{0}}_{O}\ra E^{G_{0}}_{O}$ is continuous.

First decompose the space $G_{0}$ as a disjoint union of
$G$-invariant subspaces $O$, $V=\overline{O}^{c}$ and $\partial
O$. Since $O$ and $V$ are open subsets of the space $G_{0}$, the
spaces $W_{1}=G\times_{G_{0}}p^{-1}(V)$ respectively
$W_{2}=G\times_{G_{0}}p^{-1}(O)$ are open subspaces of the space
$G\times_{G_{0}}E^{G_{0}}_{O}$. Observing that $\mu|_{W_{1}}$ is
basically the left action of the groupoid $G|_{V}$ on $V$, while
$\mu|_{W_{2}}$ equals the action map of the representation of the
groupoid $G|_{O}$ on the bundle $E_{O}$, we see that
$\mu|_{W_{1}}$ respectively $\mu|_{W_{2}}$ are continuous maps.
Now let $g\in G(x,y)$ be an arrow such that $x\in\partial O$ and
therefore $y\in\partial O$. For such $g$ there exists only one
element in $G\times_{G_{0}}E^{G_{0}}_{O}$ with first coordinate
$g$, namely $(g,0_{x})$ and we have $\mu(g,0_{x})=g\cdot
0_{x}=0_{y}$. We need to show that the map $\mu$ is continuous at
the point $(g,0_{x})$. To this extent choose arbitrary
neighbourhood $W$ of the point $0_{y}$ in $E^{G_{0}}_{O}$. By the
definition of the topology on the space $E^{G_{0}}_{O}$ we can
find a smaller tubular open neighbourhood $B(U_{y},0,\epsilon)$ of
the point $0_{y}$, where $U_{y}$ is a neighbourhood of the point
$y$ in $G_{0}$ and $0$ is the zero section of $E^{G_{0}}_{O}$.
Shrinking the set $U_{y}$ if necessary we can assume that there
exists a bisection $U$ of the groupoid $G$ through the arrow $g$
such that $t(U)=U_{y}$. The unitarity of the representation of $G$
on $E^{G_{0}}_{O}$ now implies that
$\mu(U\times_{G_{0}}B(s(U),0,\epsilon))\subset
B(U_{y},0,\epsilon)$, which proves that $\mu$ is continuous at the
point $(g,0_{x})$.
\end{proof}

\begin{proof}[Proof of Theorem \ref{Representation of orbifold groupoids}]
Let $G$ be an orbifold groupoid over $G_{0}$. The quotient
projection $q:G_{0}\ra G_{0}/G$ is an open surjective map, which
insures that the space $Q=G_{0}/G$ is second countable and locally
compact. Since $G$ is a proper groupoid, the map $(s,t):G\ra
G_{0}\times G_{0}$ is a proper map between Hausdorff topological
spaces and hence a closed map. This shows that $(s,t)(G)\subset
G_{0}\times G_{0}$ is a closed equivalence relation, so $Q$ is a
Hausdorff space. It follows that $Q$ is paracompact.

We can use Proposition \ref{Invariant} to find for each $x\in
G_{0}$ a $G$-invariant open neighbourhood $O_{x}$ of the point $x$
and a faithfull unitary representation of the groupoid
$G|_{O_{x}}$ on a Hermitian vector bundle $E_{O_{x}}$ over
$O_{x}$. The family $\{q(O_{x})\}_{x\in G_{0}}$ is an open cover
of the second countable paracompact space $Q$, so we can choose a
countable, locally finite refinement $\{V'_{i}\}_{i\in\NN}$ of the
cover $\{q(O_{x})\}_{x\in G_{0}}$. Pulling back the sets
$\{V'_{i}\}_{i\in\NN}$ to $G_{0}$ we get a locally finite covering
$\{V_{i}\}_{i\in\NN}$ of the space $G_{0}$ by $G$-invariant open
subsets, where we denoted $V_{i}=q^{-1}(V_{i}')$. For each
$i\in\NN$ we can choose some $x_{i}$, such that $V_{i}\subset
O_{x_{i}}$, to get a faithfull unitary representation of the
groupoid $G|_{V_{i}}$ on the hermitian vector bundle
$E_{V_{i}}=E_{O_{x_{i}}}|_{V_{i}}$. By Proposition \ref{Extension}
we can extend the unitary representation of the groupoid
$G|_{V_{i}}$ on the bundle $E_{V_{i}}$ to the unitary
representation of the groupoid $G$ on the family of finite
dimensional Hilbert spaces $E^{i}=E_{V_{i}}^{G_{0}}$.

Let $E=\bigoplus_{i\in\NN}E^{i}$ be the continuous family of
finite dimensional Hilbert spaces over $G_{0}$, defined as the sum
of the families $\{E^{i}\}_{i\in\NN}$ as in Example
\ref{Ex_Families of Hilbert spaces}. The representations of the
groupoid $G$ on the families $\{E^{i}\}_{i\in\NN}$ canonically
induce a continuous unitary representation of the groupoid $G$ on
$E$, defined by $g\cdot(v_{1},v_{2},\ldots)=(g\cdot v_{1},g\cdot
v_{2},\ldots)$. To see that the representation of $G$ on $E$ is
faithfull it is enough to show that for each $x\in G_{0}$ the
group $G_{x}$ acts faithfully on the Hilbert space $E_{x}$.
Straight from the definition of the representation of $G$ on $E$
it follows that the representation of the group $G_{x}$ on $E_{x}$
decomposes as the direct sum of the representations of the group
$G_{x}$ on the spaces $E^{i}_{x}$ for $i\in\NN$. Since
$\{V_{i}\}_{i\in\NN}$ is a cover of the space $G_{0}$, there
exists some $i\in\NN$ such that $x\in V_{i}$. Faithfullness of the
representation of the groupoid $G|_{V_{i}}$ on the bundle
$E_{V_{i}}$ implies that the representation of the group $G_{x}$
on $E^{i}_{x}$ is faithfull and consequently the representation of
the group $G_{x}$ on $E_{x}$ is faithfull as well.
\end{proof}

\section{Orbifolds as global quotients}

\subsection{Families of unitary frames and proper bundles of topological groups}

Let $X$ be a locally compact Hausdorff space, $O\subset X$ an open
subset and $E_{O}$ an $n$-dimensional hermitian vector bundle over
$O$. Denote by $E_{O}^{X}$ the trivial extension of the hermitian
vector bundle $E_{O}$ to a continuous family of finite dimensional
Hilbert spaces over $X$ as in Example \ref{Ex_Families of Hilbert
spaces}. To the continuous family $E_{O}^{X}$ of Hilbert spaces
over $X$ one can assign a family $\U(E_{O}^{X})$ of unitary frames
over $X$ as follows.

We first recall the definition of the principal $U(n)$-bundle of
unitary frames $\U(E)$ of a hermitian vector bundle $E$ over $B$.
A unitary frame at a point $x\in B$ is an ordered orthonormal base
of the Hilbert space $E_{x}$. We can represent it as a unitary
isomorphism $e_{x}:\CC^{n}\ra E_{x}$, where $\CC^{n}$ is equipped
with the standard scalar product. The set $\U(E)_{x}$ of all
frames of the bundle $E$ at $x$ is equipped with a natural right
action of the Lie group $U(n)$: a group element $A\in U(n)$ acts
on the frame $e_{x}\in\U(E)_{x}$ by $e_{x}\cdot A=e_{x}\circ A$ to
give a new frame at $x$. The bundle $\U(E)$ of unitary frames of
$E$ is the disjoint union of all the spaces $\U(E)_{x}$ with the
natural projection map $\pi:\U(E)\ra B$, sending each of the sets
$\U(E)_{x}$ to their respective $x\in B$. A unitary local
trivialisation $\phi_{i}:E|_{U_{i}}\ra U_{i}\times\CC^{n}$ of the
hermitian vector bundle $E$ induces a local trivialisation
$\psi_{i}:\pi^{-1}(U_{i})\ra U_{i}\times U(n)$ of the bundle
$\U(E)$, given by $\psi_{i}(e)=(\pi(e),\phi_{i,\pi(e)}\circ e)$,
where $\phi_{i,\pi(e)}$ is the unitary isomorphism from
$E_{\pi(e)}$ to $\CC^{n}$. The topology on the space $\U(E)$ is
the finest topology which makes all of the maps $\psi_{i}^{-1}$
continuous.

The definition of the principal $U(n)$-bundle $\U(E_{O})$ of
unitary frames of the hermitian vector bundle $E_{O}$ can be
extended to define the family $\U(E_{O}^{X})$ of unitary frames of
the trivial extension $E_{O}^{X}$ of the bundle $E_{O}$. As a set
$\U(E_{O}^{X})$ is defined to be the disjoint union
\[
\U(E_{O}^{X})=\U(E_{O})\coprod (X\backslash O).
\]
Let $\pi=\pi_{E_{O}}\coprod id|_{X\backslash O}:\U(E_{O}^{X})\ra
X$ denote the projection from the space $\U(E_{O}^{X})$ onto $X$,
where $\pi_{E_{O}}:\U(E_{O})\ra O$ is the ordinary projection from
the bundle of the unitary frames of $E_{O}$ onto $O$. We will
define the topology on the space $\U(E_{O}^{X})$ by specifying its
basis $\cB$. The basic open sets of the space $\U(E_{O}^{X})$ are
of two kinds:
\begin{enumerate}
\item For each open subset $O'$ of $X$ we have
$\pi^{-1}(O')\in\cB$; \item If $O'\subset\U(E_{O})$ is an open
subset then $O'\in\cB$.
\end{enumerate}
Equipped with the topology defined by the basis $\cB$ the family
of frames $\U(E_{O}^{X})$ becomes a locally compact Hausdorff
space such that the map $\pi$ is a continuous open surjection.

Now let $G$ be an orbifold groupoid over $G_{0}$ and let $O$ be a
$G$-invariant open subset of $G_{0}$. To every $n$-dimensional
hermitian vector bundle $E_{O}$ over $O$ we associate the proper
bundle $U_{O}(n)$ of topological groups over the space $Q=G_{0}/G$
in the following way. The fiber of the bundle $U_{O}(n)$ at $x\in
Q$ is given by
\begin{displaymath}
(U_{O}(n))_{x}=\left\{ \begin{array} {ll}
U(n), & \textrm{$x\in q(O)$,}\\
\{0\}, & \textrm{otherwise,}\\
\end{array}\right.
\end{displaymath}
where $q:G_{0}\ra Q$ is the quotient map. The topology on the
space $U_{O}(n)$ is the quotient topology from the space $Q\times
U(n)$, where the $U(n)$-fibers are fibrewise shrunk to a point for
the points outside of $q(O)$. The map $r_{O}:U_{O}(n)\ra Q$ is a
proper continuous map from which it follows that $U_{O}(n)$ is a
proper bundle of topological groups.

We have a natural right action of the proper bundle of groups
$U_{O}(n)$ on the family of unitary frames
$\pi:\U(E_{O}^{G_{0}})\ra G_{0}$ along the map
$q\circ\pi:\U(E_{O}^{G_{0}})\ra Q$. It is explicitely given by the
formula $e_{x}\cdot A_{q(x)}=e_{x}\circ A_{q(x)}$, for $e_{x}\in
\pi^{-1}(O)$ and where $A_{q(x)}\in U(n)$ is seen as a unitary
isomorphism $A_{q(x)}:\CC^{n}\ra\CC^{n}$. For $x$ outside of $O$
the action is defined in the only possible way. Note that the
proper bundle of topological groups $U_{O}(n)$ acts freely and
transitively along the fibers of the map $\pi$.

\subsection{Presenting orbifolds as translation groupoids}

Let $M$ be a smooth manifold and $K$ a compact Lie group acting
smoothly and almost freely on the manifold $M$ from the right. The
translation groupoid $M\rtimes K$ is then Morita equivalent to an
orbifold groupoid. The following proposition shows that the same
is true if we replace the compact group $K$ with some proper
bundle of Lie groups $U$.

\begin{prop}\label{Prop_Translation groupoids of proper bundles of groups}
Let $M$ be a smooth manifold and let $U$ be a proper bundle of Lie
groups over $N$, acting smoothly and almost freely from the right
on the space $M$ along the smooth map $\phi:M\ra N$. Then the
translation groupoid $M\rtimes U$ is Morita equivalent to an
orbifold groupoid.
\end{prop}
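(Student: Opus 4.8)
The plan is to realize $G:=M\rtimes U$ as a proper Lie groupoid all of whose isotropy groups are finite, and then to produce a Morita equivalent \emph{\'{e}tale} model by restricting to a complete transversal to the orbits; since properness is preserved under such restriction, the model will be an orbifold groupoid.

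First I would check that $G$ is a proper Lie groupoid. Because $r:U\ra N$ is a submersion, the target map $t(p,u)=p$ is the pullback of $r$ along $\phi$ and hence a submersion, and $s=t\com\inv$ is a submersion as well, so $G$ is a Lie groupoid over $M$. By Example \ref{Ex_Groupoids}(v) the properness of the bundle $U$ makes $G$ proper, so in particular each fiber $U_{\phi(p)}$ is a compact Lie group. The isotropy group of $G$ at $p\in M$ is the stabilizer $\Gamma_{p}=\{u\in U_{\phi(p)}\mid p\cdot u=p\}$, which is \emph{finite} because the action is almost free. Hence the orbit through $p$ is the compact submanifold $p\cdot U_{\phi(p)}$ of dimension $\dim U_{\phi(p)}$, and every isotropy group of $G$ is $0$-dimensional.

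Next I would linearize $G$ near each orbit. Since $G$ is proper with finite isotropy, the slice theorem for proper actions supplies, for every $p\in M$, a $\Gamma_{p}$-invariant locally closed submanifold $S_{p}\subset M$ through $p$, transverse to the orbit and of complementary dimension $\dim M-\dim U_{\phi(p)}$, whose $G$-saturation is equivariantly modelled on the associated bundle. The outcome I want from this is that the restricted groupoid $G|_{S_{p}}=t^{-1}(S_{p})\cap s^{-1}(S_{p})$ is isomorphic to the translation groupoid $\Gamma_{p}\ltimes S_{p}$ of the finite group $\Gamma_{p}$ acting on $S_{p}$, which is visibly a proper \'{e}tale Lie groupoid.

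Finally I would assemble these slices into a global transversal. The saturations of the $S_{p}$ project to an open cover of the paracompact orbit space, so I can extract a locally finite family $\{S_{i}\}_{i}$ of slices whose $G$-saturations still cover $M$; let $j:T\ra M$ be the immersion of $T=\coprod_{i}S_{i}$ and let $H$ be the pullback groupoid of $G$ along $j$, with $H_{0}=T$. Properness of $H$ is automatic, being the restriction of a proper groupoid to $T\times T$. The step where transversality is essential is that $H$ is \emph{\'{e}tale}: locally $H$ is built from the pieces $\Gamma_{i}\ltimes S_{i}$ together with the transition arrows joining points of overlapping slices, and in every case both $s$ and $t$ carry a neighbourhood of an arrow diffeomorphically onto open subsets of transversals of the same dimension $\dim M-\dim U_{\phi(p)}$, so $s$ and $t$ are local diffeomorphisms. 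Thus $H$ is a proper \'{e}tale Lie groupoid, i.e. an orbifold groupoid; and because $T$ meets every orbit transversally, the space $P=G\times^{s,j}_{M}T$ of arrows of $G$ with source in $T$, with moment maps $t:P\ra M$ and the projection $P\ra T$ and the evident commuting left $G$- and right $H$-actions, is a Morita equivalence between $M\rtimes U$ and $H$. The main obstacle is precisely the linearization together with the \'{e}taleness check: one must run the slice construction smoothly in the $N$-direction (the fibers $U_{n}$, and hence the orbit dimensions, may vary with $n$) and then verify that restriction to the complementary-dimensional transversal really does turn $s$ and $t$ into local diffeomorphisms; the properness and the Morita equivalence are comparatively routine.
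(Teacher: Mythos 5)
Your first paragraph reproduces the part of the argument that the paper also makes: $G=M\rtimes U$ is a proper Lie groupoid by Example \ref{Ex_Groupoids}(v), and almost freeness means its isotropy groups are finite, hence discrete. At that point the paper is done modulo a citation: Proposition $5.20$ of \cite{MoMr} (the Crainic--Moerdijk characterization of foliation groupoids) says that a Lie groupoid with discrete isotropy groups is Morita equivalent to an \'{e}tale Lie groupoid, and Morita invariance of properness makes that \'{e}tale groupoid an orbifold groupoid. You instead set out to reprove that proposition by hand, via slices and a complete transversal, and this is where your argument has a genuine gap rather than just a missing reference.

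The gap is the linearization step. You invoke ``the slice theorem for proper actions'' to produce, for each $p\in M$, a slice $S_{p}$ with $G|_{S_{p}}\cong\Gamma_{p}\ltimes S_{p}$. But the classical slice theorem concerns actions of a \emph{fixed} Lie group, whereas here nearby orbits are orbits of \emph{different} groups: the orbit of $q$ is $q\cdot U_{\phi(q)}$, and the acting group changes as $q$ leaves the fiber $\phi^{-1}(\phi(p))$. One cannot reduce to a fixed-group action near $p$, because $U$ is not assumed locally trivial (the paper treats local triviality as an extra hypothesis in Example \ref{Ex_Groupoids}(iii)) and $\phi$ is not assumed to be a submersion; what you would actually need is a slice/linearization theorem for proper Lie groupoids (Weinstein--Zung), a result far deeper than the proposition being proved. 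Your \'{e}tale-ness verification inherits the same problem --- it is asserted by a dimension count, and you yourself flag ``the linearization together with the \'{e}taleness check'' as the unresolved main obstacle. The efficient repair needs no slices at all: since the isotropy groups are discrete, the anchor of the Lie algebroid of $G$ is injective, so the orbits define a regular foliation of $M$ of locally constant rank; pulling $G$ back to a complete transversal $T$ of this foliation yields an \'{e}tale groupoid Morita equivalent to $G$, and properness passes across the equivalence. That is precisely the content and the proof of the cited Proposition $5.20$ in \cite{MoMr}; with it quoted, your opening paragraph already completes the argument, and your bibundle $P=G\times^{s,j}_{M}T$ is then the standard equivalence and is fine.
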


\begin{proof}
The idea of the proof is similar to the case of an almost free
action of a compact Lie group. A proper bundle of Lie groups $U$
over $N$ is a bundle of topological groups over $N$ with a
structure of a Lie groupoid. By definition the action of $U$ on
$M$ is almost free if and only if the isotropy groups of the
groupoid $M\rtimes U$ are finite and thus discrete. The groupoid
$M\rtimes U$ is a proper Lie groupoid as a translation groupoid of
a proper Lie groupoid. By Proposition $5.20$ in \cite{MoMr} the
groupoid $M\rtimes U$ is Morita equivalent to an \'etale Lie
groupoid $G$. Since properness is invariant under Morita
equivalence the groupoid $G$ is proper and \'etale, thus an
orbifold groupoid.
\end{proof}

As proved in Theorem \ref{Representation of orbifold groupoids}
each orbifold groupoid $G$ admits a faithfull unitary
representation on a continuous family of finite dimensional
Hilbert spaces over $G_{0}$. We can use Theorem
\ref{Representation of orbifold groupoids} to prove the following
partial converse of Proposition \ref{Prop_Translation groupoids of
proper bundles of groups}.

\begin{theo} \label{Orbifold groupoids Morita equivalence}
Let $G$ be an orbifold groupoid. Then $G$ is Morita equivalent to
a translation groupoid associated to a continuous almost free
action of a proper bundle of topological groups on a topological
space. The bundle can be chosen such that the fibers are finite
products of unitary groups.
\end{theo}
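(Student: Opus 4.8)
The plan is to adapt the frame-bundle construction sketched at the start of this section for faithful representations on honest hermitian vector bundles to the continuous family of Hilbert spaces produced by Theorem \ref{Representation of orbifold groupoids}, using the families of unitary frames $\U(E_{O}^{X})$ and the proper bundles of groups $U_{O}(n)$ introduced in the previous subsection.

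First I would invoke Theorem \ref{Representation of orbifold groupoids} in the explicit form given by its proof: a faithful unitary representation of $G$ on $E=\bigoplus_{i\in\NN}E^{i}$, where each summand $E^{i}=E_{V_{i}}^{G_{0}}$ is the trivial extension of an $n_{i}$-dimensional hermitian vector bundle over a $G$-invariant open set $V_{i}$, and where $\{q(V_{i})\}_{i\in\NN}$ is a locally finite cover of $Q=G_{0}/G$. For each $i$ form the family of unitary frames $\U(E^{i})=\U(E_{V_{i}}^{G_{0}})$ of the previous subsection and set $P=\prod_{i\in\NN}^{G_{0}}\U(E^{i})$, the fibered product over $G_{0}$ along the projections $\U(E^{i})\to G_{0}$; since $\{V_{i}\}$ is locally finite, $P$ is locally a finite product, hence again a locally compact Hausdorff space with open surjective projection $\pi\colon P\to G_{0}$. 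On the group side I would set $U=\bigoplus_{i\in\NN}U_{V_{i}}(n_{i})$, the bundle of topological groups over $Q$ whose fiber over $y$ is $\prod_{i:\,y\in q(V_{i})}U(n_{i})$; local finiteness makes this a finite product of unitary groups, as required, and I would check that the locally finite sum of the proper bundles $U_{V_{i}}(n_{i})$ is again a proper bundle of topological groups.

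Next come the two commuting actions on $P$. The representation of $G$ on $E$ gives a left action of $G$ on $P$, where an arrow $g\in G(x,y)$ sends a frame $(e_{i})$ over $x$ to $(g\circ e_{i})$ over $y$; faithfulness of the representation forces this action to be free. Componentwise precomposition gives the right action of $U$ on $P$ along $q\circ\pi$, which is free and transitive along the fibers of $\pi$ because each factor $U_{V_{i}}(n_{i})$ acts so on $\U(E^{i})$. The two actions commute, so the $U$-action descends to a continuous right action on the orbit space $M=G\backslash P$ along the induced map $M\to Q$; and since $G$ is proper and acts freely, the action groupoid $G\ltimes P$ is proper, so $M$ is a locally compact Hausdorff space. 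For almost freeness I would observe that for $p\in\pi^{-1}(x)$ the free transitivity of $U$ on that fiber produces an injective homomorphism $G_{x}\to U_{q(x)}$, $g\mapsto A_{g}$, defined by $g\cdot p=p\cdot A_{g}$, whose image is exactly the isotropy group of the $U$-action at $[p]$; as $G_{x}$ is finite, this isotropy is finite, so the action is almost free.

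Finally I would exhibit $P$ itself as the Morita equivalence between $G$ and $M\rtimes U$: the left $G$-action along $\pi$ and the right $(M\rtimes U)$-action along the quotient map $\phi\colon P\to M$ commute and satisfy the invariance conditions, $G$ acts freely and transitively along the fibers of $\phi$ (which are the $G$-orbits), while $U$, and hence $M\rtimes U$, acts freely and transitively along the fibers of $\pi$, so that $(\mathrm{pr}_{1},\eta)\colon P\times_{M}(M\rtimes U)\to P\times_{G_{0}}P$ is a homeomorphism; this makes $P$ a two-sided principal bibundle, i.e.\ a Morita equivalence. The step I expect to be the main obstacle is the point-set bookkeeping near the \emph{boundary}: the frame spaces $\U(E^{i})$ degenerate to single points precisely where the fibers of $E^{i}$ drop to dimension zero, so verifying that $P$ is locally compact Hausdorff with $\pi$ open, that $U$ is genuinely a proper bundle of topological groups, and that the $U$-action and the descent to $M$ stay continuous across these boundary points, where frames appear and disappear, is the delicate part; everything else follows the compact-group template together with the constructions of the previous subsection.
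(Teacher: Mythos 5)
Your proposal is correct and follows essentially the same route as the paper's own proof: your $P$ is the paper's family of unitary frames $\U(E)$ (the fibrewise product of the $\U(E^{i})$ over $G_{0}$), your $U$ is the same fibrewise product of the proper bundles $U_{V_{i}}(n_{i})$, and your $M\rtimes U$ is the paper's translation groupoid $H=(G\backslash\U(E))\rtimes U$, with the same descent of the $U$-action and the same two-sided principal bibundle argument. The boundary continuity issues you flag as the delicate part are precisely what the paper isolates and settles in Proposition \ref{Prop_Unitary rep to continuous action} and the propositions surrounding it, together with the properness-of-the-$G$-action argument used to show the comparison maps are homeomorphisms.
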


We will prove Theorem \ref{Orbifold groupoids Morita equivalence}
by constructing a space $\pi:\U(E)\ra G_{0}$ over $G_{0}$,
equipped with a free left action of the orbifold groupoid $G$
along the map $\pi$ and with a right action of a proper bundle of
topological groups that acts freely and transitively along the
fibers of the map $\pi$.

\begin{prop}\label{Prop_Unitary rep to continuous action}
Let $G$ be an orbifold groupoid over $G_{0}$, $O$ a $G$-invariant
open subset of $G_{0}$ and let $E_{O}^{G_{0}}$ denote the trivial
extension of the hermitian vector bundle $E_{O}$ over $O$ to a
continuous family of finite dimensional Hilbert spaces over
$G_{0}$. Every continuous unitary representation of the groupoid
$G$ on the family $E_{O}^{G_{0}}$ induces a continuous action of
the groupoid $G$ on the family of frames $\U(E_{O}^{G_{0}})$.
\end{prop}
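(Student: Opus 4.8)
The plan is to write down the induced action explicitly, to record that it covers the target map $t$, and then to prove continuity by testing the preimages of the two families of basic open sets that generate the topology of $\U(E_O^{G_0})$. The boundary behaviour will turn out to be automatic, and the only analytic input will be the continuity of the ordinary frame bundle action over $O$.

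First I would define the action $\mu\colon G\times^{s,\pi}_{G_0}\U(E_O^{G_0})\ra\U(E_O^{G_0})$ case by case. For $g\in G(x,y)$ and a frame $e$ over $x$: if $x\in O$ then $y\in O$ by $G$-invariance of $O$, the frame is a unitary isomorphism $e\colon\CC^n\ra (E_O)_x$ (with $n$ the rank of $E_O$), the representation makes $g$ act unitarily as $(E_O)_x\ra(E_O)_y$, and I set $g\cdot e=g\com e$, a unitary frame over $y$; if $x\notin O$ then the fiber of $\pi$ over $x$ is the single degenerate point $x$, and I set $g\cdot x=y=t(g)$, which again lies over $G_0\setminus O$ by $G$-invariance. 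The unit and composition axioms of an action follow at once from the corresponding identities for the representation of $G$ on $E_O^{G_0}$ together with associativity of composition of maps. The one fact to record is the covering identity $\pi(g\cdot e)=t(g)$, valid in both cases, i.e. $\pi\com\mu=t\com\pr_1$ with $\pr_1$ the projection of the fiber product onto $G$.

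To prove continuity it suffices to show that $\mu^{-1}(W)$ is open for each basic $W\in\cB$. If $W=\pi^{-1}(O')$ with $O'\subseteq G_0$ open (a set of the first kind), the covering identity gives $\mu^{-1}(W)=\pr_1^{-1}(t^{-1}(O'))$, which is open since $t$ and $\pr_1$ are continuous; this also settles continuity towards the degenerate frames, since every point over $G_0\setminus O$ has a neighbourhood basis consisting only of sets of the first kind. If instead $W=O'\subseteq\U(E_O)=\pi^{-1}(O)$ is open (a set of the second kind), then $\mu^{-1}(O')$ is contained in the open subset $D_O=\{(g,e)\mid s(g)\in O\}$ of the fiber product: from $g\cdot e\in O'\subseteq\pi^{-1}(O)$ and the covering identity we get $t(g)\in O$, hence $s(g)\in O$ by $G$-invariance. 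On $D_O$ we have $s^{-1}(O)=t^{-1}(O)=G|_O$, so $D_O=G|_O\times^{s,\pi}_{O}\U(E_O)$ and $\mu|_{D_O}$ is precisely the classical action of the Lie groupoid $G|_O$ on the frame bundle $\U(E_O)$ induced by the unitary representation of $G|_O$ on the honest hermitian bundle $E_O$. Identifying a frame $e$ with the orthonormal tuple $(e(\varepsilon_1),\dots,e(\varepsilon_n))$ of its values on the standard basis of $\CC^n$ realises $\U(E_O)$ as a subspace of the $n$-fold fiber product $E_O\times_O\dots\times_O E_O$, and there the $i$-th component of $g\cdot e$ is the value of the (continuous) action map of the representation on $(g,e(\varepsilon_i))$; hence $\mu|_{D_O}$ is continuous into $\U(E_O)$, so $\mu^{-1}(O')=(\mu|_{D_O})^{-1}(O')$ is open in $D_O$ and therefore in the fiber product. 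With both kinds of basic sets treated, $\mu$ is continuous.

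The hard part, such as it is, will be this last identification: checking that the frame bundle topology of $\U(E_O)$ agrees with the subspace topology inherited from the $n$-fold fiber product of $E_O$, which is what lets the continuity of $\mu|_{D_O}$ be read off from that of the representation. By contrast, the a priori worrying behaviour near $\partial O$, where frames could degenerate, causes no trouble: the topology on $\U(E_O^{G_0})$ is built so that frames collapse to their base points over $G_0\setminus O$, and there continuity of $\mu$ reduces to continuity of $t$ alone.
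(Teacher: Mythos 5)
Your proof is correct, and although the action you define is necessarily the same as the paper's, your continuity argument is organized differently and is in places cleaner. The paper proves continuity pointwise, decomposing $\U(E_{O}^{G_{0}})$ into the three pieces $G_{0}\backslash\overline{O}$, $\partial O$ and $\U(E_{O})$: over $G_{0}\backslash\overline{O}$ the action is just the natural action of $G$ on its objects; over $\U(E_{O})$ it passes to unitary local trivializations of $E_{O}$, in which the action becomes multiplication by a continuous locally defined map $m_{g}$ into $U(n)$; and at a boundary point $(g,x)$ it chooses a bisection $V_{g}$ through $g$ with $t(V_{g})\subset V$ so that $\mu(V_{g}\times_{G_{0}}\U(E_{O}^{G_{0}}))\subset\pi^{-1}(V)$. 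Your codomain-side check of preimages of basic open sets buys two simplifications: the covering identity $\pi\com\mu=t\com\pr_{1}$ disposes of all first-kind sets, hence of everything outside $\U(E_{O})$ including the boundary, in one line and with no bisections; and for second-kind sets your observation that $\mu^{-1}(O')$ lies in the open set $D_{O}\cong G|_{O}\times_{O}\U(E_{O})$ correctly isolates the classical frame-bundle action over $O$ as the only remaining issue. There you diverge again: instead of the paper's chart computation with $m_{g}$, you embed $\U(E_{O})$ into the $n$-fold fiber product $E_{O}\times_{O}\cdots\times_{O}E_{O}$ and read off continuity componentwise from the continuity of the representation's action map. This works, and you rightly flag the one fact it rests on, namely that the frame-bundle topology agrees with the subspace topology from the fiber product; this is true (in a unitary local trivialization it reduces to the fact that $U(n)\subset\CC^{n\times n}$ carries the topology of its tuple of columns, and both topologies are local over $O$), but verifying it is the exact counterpart of the paper's chart argument, so neither route is shorter on that part. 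The net gain of your version is the uniform and bisection-free treatment of the degenerate locus.
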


\begin{proof}
Define the action of the groupoid $G$ on the space
$\U(E_{O}^{G_{0}})$ as follows:
\begin{enumerate}
\item For $e_{x}\in\U(E_{O})_{x}$ and $g\in G(x,y)$ define $g\cdot
e_{x}=g\circ e_{x}$, where $g$ on the right is interpreted as a
unitary map from $E_{x}$ to $E_{y}$, coming from the
representation of the groupoid $G|_{O}$ on the bundle $E_{O}$.

\item For $x\in G_{0}\backslash O$ and $g\in G(x,y)$ define
$g\cdot x=y$.
\end{enumerate}
To show that this defines a continuous action
$\mu:G\times_{G_{0}}\U(E_{O}^{G_{0}})\ra\U(E_{O}^{G_{0}})$ we use
similar techniques as in the proof of Proposition \ref{Extension}.
First decompose the space $\U(E_{O}^{G_{0}})$ as a disjoint union
of the subspaces $G_{0}\backslash\overline{O}$, $\partial O$ and
$\U(E_{O})$.

The sets $G_{0}\backslash\overline{O}$ and $\U(E_{O})$ are basic
open subsets of the space $\U(E_{O}^{G_{0}})$. First note that the
restriction of the map $\mu$ to the open set
$G\times_{G_{0}}(G_{0}\backslash\overline{O})$ is equal to the
natural left action of the groupoid $G$ on
$G_{0}\backslash\overline{O}$ and thus continuous. Choose now any
element $(g,e)\in G\times_{G_{0}}\U(E_{O})$, where $g\in G|_{O}$
is an arrow from $x$ to $y$, and unitary local trivializations of
the vector bundle $E_{O}$ around $x$ respectively $y$. The unitary
representation of the groupoid $G$ on $E_{O}$ induces a continuous
map $m_{g}$ from a small neighbourhood of the arrow $g$ into the
group $U(n)$, with respect to these two local trivializations. In
the associated principal bundle charts the action of $G$ on
$\U(E_{O})$ then looks like multiplication by the map $m_{g}$ and
is hence continuous.

It remains to be proven that $\mu$ is continuous at the points of
the form $(g,x)\in G\times_{G_{0}}\U(E_{O}^{G_{0}})$ where
$x\in\partial O\subset \U(E_{O}^{G_{0}})$ and $g\in G(x,y)$. We
then have $g\cdot x=y$ and $y\in\partial O$ as well. Let $W$ be
any neighbourhood of the point $y\in\partial O
\subset\U(E_{O}^{G_{0}})$. By the definition of the topology on
the space $\U(E_{O}^{G_{0}})$ there exists a neighbourhood $V$ of
the point $y\in G_{0}$ such that $\pi^{-1}(V)\subset W$, where
$\pi:\U(E_{O}^{G_{0}})\ra G_{0}$ is the projection map onto
$G_{0}$. Choose a bisection $V_{g}$ of the arrow $g\in G$ such
that $t(V_{g})\subset V$. The set
$V_{g}\times_{G_{0}}\U(E_{O}^{G_{0}})$ is then an open
neighbourhood of the point $(g,x)$ such that
$\mu(V_{g}\times_{G_{0}}\U(E_{O}^{G_{0}}))\subset\pi^{-1}(V)\subset
W$.
\end{proof}

Now choose an orbifold groupoid $G$ over $G_{0}$ and let $E$ be a
continuous family of finite dimensional Hilbert spaces over
$G_{0}$ together with a faithfull unitary representation of the
groupoid $G$ as constructed in the proof of Theorem
\ref{Representation of orbifold groupoids}. Here we use the same
notations. The family $E$ can be decomposed as a direct sum
$E=\bigoplus_{i\in\NN}E^{i}$, where each $E^{i}=E_{V_{i}}^{G_{0}}$
is a trivial extension of the hermitian vector bundle $E_{V_{i}}$
over $V_{i}$. We define the family of unitary frames $\U(E)$ of
the family $E$, with respect to the decomposition
$E=\bigoplus_{i\in\NN}E^{i}$, to be the fibrewise product of the
families $\{\U(E^{i})\}_{i\in\NN}$ along the projection maps
$\pi_{i}$,
\[
\U(E)=\{(e_{1},e_{2},\ldots)\in\prod_{i\in\NN}\U(E^{i})|\pi_{1}(e_{1})=\pi_{2}(e_{2})
=\ldots\}.
\]
The space $\U(E)$ has a natural projection $\pi$ onto the space
$G_{0}$, induced from any of the projections $\pi_{i}$. The fiber
of the space $\U(E)$ over a point $x\in G_{0}$ can be canonically
identified with the product of the spaces of frames
$\prod_{i\in\NN}\U(E^{i})_{x}$. Since the family
$\{V_{i}\}_{i\in\NN}$ is a locally finite cover of the space
$G_{0}$ this product is in fact finite and therefore homeomorphic
to a finite product of unitary groups.

\begin{prop}
The faithfull unitary representation of the orbifold groupoid $G$
on the continuous family of Hilbert spaces $E$ over $G_{0}$
induces a continuous free action of the groupoid $G$ on the family
of unitary frames $\U(E)$.
\end{prop}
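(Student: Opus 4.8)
The plan is to define the action of $G$ on $\U(E)$ componentwise and to reduce every required property to the single-summand case already settled in Proposition \ref{Prop_Unitary rep to continuous action}. Recall that $E=\bigoplus_{i\in\NN}E^{i}$ with $E^{i}=E_{V_{i}}^{G_{0}}$, and that the faithfull unitary representation of $G$ on $E$ is the diagonal one, $g\cdot(v_{1},v_{2},\ldots)=(g\cdot v_{1},g\cdot v_{2},\ldots)$; in particular its restriction to each summand is exactly the continuous unitary representation of $G$ on $E^{i}$ produced by Proposition \ref{Extension}. By Proposition \ref{Prop_Unitary rep to continuous action} each of these induces a continuous action $\mu_{i}$ of $G$ on the family of frames $\U(E^{i})$, and each $\mu_{i}$ lies over the action of $G$ on $G_{0}$, that is $\pi_{i}(g\cdot e_{i})=t(g)$. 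I would then set
\[
g\cdot(e_{1},e_{2},\ldots)=(g\cdot e_{1},g\cdot e_{2},\ldots)
\]
for $g\in G(x,y)$ and $(e_{1},e_{2},\ldots)\in\U(E)_{x}$. Since the $\mu_{i}$ all cover $t(g)=y$, the projections of the components on the right are again all equal (to $y$), so the tuple lies in $\U(E)_{y}$ and the assignment is well defined; associativity and the identity law are inherited immediately from the corresponding properties of the $\mu_{i}$.

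Continuity I would verify through the universal property of the fibrewise product. Because the topology on $\U(E)$ is the fibrewise-product topology coming from $\prod_{i\in\NN}\U(E^{i})$ (as in Example \ref{Ex_Families of Hilbert spaces}), a map into $\U(E)$ is continuous precisely when each of its components is. The $j$-th component of the action map $\mu\colon G\times_{G_{0}}\U(E)\ra\U(E)$ factors as the projection $G\times_{G_{0}}\U(E)\ra G\times_{G_{0}}\U(E^{j})$ followed by $\mu_{j}$, and both maps are continuous; hence $\mu$ is continuous. Local finiteness of $\{V_{i}\}_{i\in\NN}$ ensures that these products are fibrewise finite, so no convergence subtleties enter. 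This step is essentially formal once Proposition \ref{Prop_Unitary rep to continuous action} is in hand.

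Freeness is where the faithfullness hypothesis is actually consumed, and it is the step I would treat most carefully even though it is short — it is the only genuinely new content of the statement. Suppose $g\in G_{x}$ fixes a frame $(e_{1},e_{2},\ldots)\in\U(E)_{x}$. As $\{V_{i}\}_{i\in\NN}$ covers $G_{0}$, choose $i$ with $x\in V_{i}$; then $E^{i}_{x}=(E_{V_{i}})_{x}$ is the full, nonzero fiber and $e_{i}$ is a genuine unitary frame $e_{i}\colon\CC^{n_{i}}\ra E^{i}_{x}$. By the definition of $\mu_{i}$ on $\U(E_{V_{i}})$ we have $g\cdot e_{i}=g\com e_{i}$, so $g\com e_{i}=e_{i}$ forces $g=\id_{E^{i}_{x}}$, i.e. $g$ acts trivially on $E^{i}_{x}$. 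Since the representation of $G|_{V_{i}}$ on $E_{V_{i}}$ is faithfull, $G_{x}$ acts faithfully on $E^{i}_{x}$, and therefore $g=1_{x}$. This shows the action of $G$ on $\U(E)$ is free and completes the proof.
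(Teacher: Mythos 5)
Your proposal is correct and follows essentially the same route as the paper's own proof: the coordinatewise definition of the action, continuity deduced from Proposition \ref{Prop_Unitary rep to continuous action} together with the fibrewise-product topology on $\U(E)$, and freeness obtained by picking $i$ with $x\in V_{i}$ and invoking faithfullness of the representation of $G|_{V_{i}}$ on $E_{V_{i}}$. The only (trivial) addition worth making explicit is that any $g$ fixing a frame automatically satisfies $s(g)=t(g)$, so it is an isotropy element; otherwise your write-up matches the paper, and is in fact slightly more detailed at the step where $g\com e_{i}=e_{i}$ forces $g$ to act as the identity on $E^{i}_{x}$.
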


\begin{proof}
The action of the orbifold groupoid $G$ on the space $\U(E)$ along
the map $\pi$ can be defined coordinatewise by
$g\cdot(e_{1},e_{2},\ldots)=(g\cdot e_{1},g\cdot e_{2},\ldots)$.
The continuity of this action follows from the fact that all the
actions of the groupoid $G$ on the spaces $\U(E^{i})$ are
continuous by Proposition \ref{Prop_Unitary rep to continuous
action} and from the fact that the topology on the space $\U(E)$
is the one induced from the product topology on $\prod_{i\in\NN}
\U(E^{i})$.

Choose an arrow $g\in G$ and an element
$e=(e_{1},e_{2},\ldots)\in\U(E)_{x}$ such that $g\cdot e=e$. Then
$g$ must be an isotropy element, $g\in G_{x}$, and there exists
some $i\in\NN$ such that $x\in V_{i}$. From the definition of the
bundle $E^{i}$ it follows that $G_{x}$ acts faithfully on
$E^{i}_{x}$ and therefore $g\cdot e_{i}=e_{i}$ implies $g=1_{x}$.
This shows that the action of the groupoid $G$ on the space
$\U(E)$ is free.
\end{proof}

Denote by $U_{i}=U_{V_{i}}(n_{i})$ the proper bundle of
topological groups over the space $Q=G_{0}/G$, associated to the
hermitian vector bundle $E_{V_{i}}$ over the $G$-invariant open
subset $V_{i}$ of $G_{0}$. The fibrewise product $U$ of the
bundles $U_{i}$ has a natural structure of a proper bundle of
topological groups over $Q$, with each fiber being isomorphic to a
finite product of unitary groups. The right actions of the bundles
$U_{i}$ on the families of unitary frames $\U(E^{i})$ induce a
right action of the bundle $U$ on the space $\U(E)$, defined by
the formula
$(e_{1},e_{2},\ldots)\cdot(A_{1},A_{2},\ldots)=(e_{1}\cdot
A_{1},e_{2}\cdot A_{2},\ldots)$. It is not hard to see that the
proper bundle of groups $U$ acts freely and transitively along the
fibers of the map $\pi:\U(E)\ra G_{0}$.

\begin{proof}[Proof of Theorem \ref{Orbifold groupoids Morita equivalence}]
For the convenience of the reader we first recall the data we have
so far. Let $G$ be an orbifold groupoid over $G_{0}$ and let
$Q=G_{0}/G$ be the space of orbits of the groupoid $G$. We have
constructed the space of frames $\U(E)$, together with the moment
maps $\pi:\U(E)\ra G_{0}$ (the projection map) and the map
$u=q\circ\pi:\U(E)\ra Q$, where $q:G_{0}\ra Q$ is the quotient
projection. There are actions of the groupoid $G$ and of the
proper bundle of groups $r:U\ra Q$ on the space $\U(E)$ from the
left along the map $\pi$ respectively from the right along the map
$u$. Both of these actions are free and moreover $U$ acts on the
space $\U(E)$ transitively along the fibers of the map $\pi$.

Now observe that both the actions are basically compositions of
linear maps from the left respectively from the right. The
associativity of the composition implies that the actions of $G$
and $U$ on the space $\U(E)$ commute. Combining this with the fact
that the map $u:\U(E)\ra Q$ is $G$-invariant, as shown by the
equalities
\[
u(g\cdot e)=q(\pi(g\cdot e))=q(t(g))=q(s(g))=q(\pi(e))=u(e),
\]
we can define a right action of the proper bundle of groups $U$ on
the quotient space $G\backslash\U(E)$, along the induced map
$u':G\backslash\U(E)\ra Q$, by the formula $[e]\cdot A=[e\cdot A]$
for $[e]\in G\backslash\U(E)$ and $u'([e])=r(A)$. This action is
almost free since the action of $U$ on $U(E)$ was free and since
$G$ has finite isotropy groups.

Let $H$ be the (proper) translation groupoid associated to this
action (see Example \ref{Ex_Groupoids}). It has the quotient
$H_{0}=G\backslash\U(E)$ as the space of objects and the space of
arrows equal to $(G\backslash\U(E))\times_{Q}U$. Note that $H_{0}$
is a Hausdorff space since $\U(E)$ is Hausdorff and $G$ is a
proper groupoid. The multiplication in the groupoid $H$ is defined
by the formula $([e],A)([e'],A')=([e],AA')$ for $[e\cdot A]=[e']$
and $u'([e])=u'([e'])=r(A)=r(A')$. The source and the target maps
of the groupoid $H$ are given by $s([e],A)=[e\cdot A]$
respectively $t([e],A)=[e]$. We have a natural action of the
translation groupoid $H$ on the space $\U(E)$, induced from the
action of the proper bundle of groups $U$ on $\U(E)$ and defined
by $e\cdot([e],A)=e\cdot A$.

We will show that the space of frames $\U(E)$, together with the
moment maps $\pi:\U(E)\ra G_{0}$ and $\phi:\U(E)\ra
H_{0}=G\backslash\U(E)$ (the quotient projection), and the actions
of groupoids $G$ respectively $H$, represents a Morita equivalence
between the orbifold groupoid $G$ and the translation groupoid
$H$.

The translation groupoid $H$ acts along the fibers of the map
$\pi$ because the bundle of groups $U$ does so, while the groupoid
$G$ acts along the fibers of the map $\phi$ by the definition of
$\phi$. Similarly, it is not hard to see that both actions
commute, so it remains to be proven that $\phi:\U(E)\ra H_{0}$ is
a principal left $G$-bundle and that $\pi:\U(E)\ra G_{0}$ is a
principal right $H$-bundle. Both the maps $\phi$ and $\pi$ are
open, the first being the quotient map of a groupoid action and
the second one being open as a projection map of a fibrewise
product along a family of open maps. Since the action of the
groupoid $G$ on the space $\U(E)$ is free and transitive along the
fibers of the map $\phi$ the map
$i_{G}:G\times_{G_{0}}\U(E)\ra\U(E)\times_{H_{0}}\U(E)$, given by
$i_{G}(g,e)=(g\cdot e,e)$, is a continuous bijection. Furthermore,
since the groupoid $G$ is proper, the action of $G$ on $\U(E)$ is
proper so $i_{G}$ is a closed map and hence a homeomorphism. This
proves that $\phi:\U(E)\ra H_{0}$ is a principal left $G$-bundle.
Similarly, the map
$i_{H}:\U(E)\times_{H_{0}}H\ra\U(E)\times_{G_{0}}\U(E)$, defined
by $i_{H}((e,([e],A))=(e,e\cdot A)$, defines a homeomorphism which
shows that $\pi:\U(E)\ra G_{0}$ is a principal right $H$-bundle.
\end{proof}

\noindent {\bf Acknowledgements.} I would like to thank A.
Henriques, D. McDuff and J. Mr\v{c}un for discussions and advices
related to the paper.

\end{document}